\documentclass[letterpaper, 10 pt, conference]{ieeeconf}

\IEEEoverridecommandlockouts                              

\usepackage{empheq}
\usepackage{amsmath,amssymb,amsfonts}
\usepackage{environ}
\usepackage{cite}
\usepackage{grffile}
\usepackage{multicol}
\usepackage{graphics}
\usepackage{subcaption}
\usepackage{url}
\usepackage{float}
\newcommand{\mcl}[1]{\mathcal{ #1}}
\newcommand{\mbf}[1]{\mathbf{ #1}}
\newcommand{\norm}[1]{\left\Vert #1\right\Vert}

\newcommand{\ip}[2]{\left\langle{#1},{#2}\right\rangle}

\newcommand{\bmat}[1]{\begin{bmatrix} #1\end{bmatrix}}
\newcommand{\R}{\mathbb{R}}

\newtheorem{thm}{Theorem}
\newtheorem{defn}[thm]{Definition}
\newtheorem{lem}[thm]{Lemma}

\newtheorem{cor}[thm]{Corollary}

\newcommand{\PI}{\pmb{\Pi}}
\newcommand{\pie}{\scalebox{0.9}[1.2]{$\mathit{\Pi}$}}
\newcommand{\fourpi}[4]{\hspace{0.5mm}\pie\hspace{-1.mm}\left[\begin{array}{c|c}
		#1&#2\\\hline #3 & \{#4\}
	\end{array}\right]}
\allowdisplaybreaks

\title{\LARGE \bf
	Static Output Feedback Stabilization of Linear Systems with Multiple Delays}

\author{Danilo Braghini$^{1}$, Eduardo S. Tognetti$^{2}$ and Matthew M. Peet$^{3}$
	\thanks{This work was supported by the National Science Foundation under grants No. 2337751 and 2429973, and in part by Conselho Nacional de Desenvolvimento Cient\'{\i}fico e Tecnol\'{o}gico (CNPq), Brazil, under grant No. 302987/2025-8.}
	\thanks{$^{1}$Danilo Braghini\{{\tt\small dbraghini@asu.edu}\} and $^{3}$Matthew M. Peet\{{\tt\small mpeet@asu.edu}\} are with the School for Engineering of Matter, Transport and Energy at Arizona State University, Tempe, AZ, USA.}
	\thanks{$^{2}$Eduardo S. Tognetti\{{\tt\small estognetti@ene.unb.br}\} is with the Electrical Engineering Department, at University of Brasília, Brasília, DF, Brazil.
	}%
}

\begin{document}
	\maketitle
	\thispagestyle{empty}
	\pagestyle{empty}
	\begin{abstract}                
		This work proposes a new procedure for the stabilization of time-delay systems using Static Output Feedback (SOF) control. A previous convex optimization approach to SOF for Ordinary Differential Equations (ODEs) is extended to time-delay systems through the use of a proposed state-space representation. This approach is based on solving two convex optimization problems, which are extensions of Linear Matrix Inequalities (LMIs) to infinite-dimensional systems. The first problem is stabilization under state feedback control; the second problem takes advantage of the Projection Lemma, which is extended here from matrices to Partial Integral (PI) operators. Finally, the results are compared with other SOF solutions for systems with delay found in the literature, showing a significant reduction in conservatism.
	\end{abstract}
	
	\section{Introduction}
    
	Many practical problems in control include models with delay. These delays may arise prima facie from representation of the physics (e.g., milling and cutting~\cite{Hajdu2020} and mine ventilation~\cite{sandou2008receding}) or from communication delay in the feedback loop. 
    Feedback control of systems with delay is particularly difficult in the case of output feedback, where only partial measurements of the system state are available. While methods for estimator design and implementation exist for time-delay systems, these approaches require real-time solution of an ODE coupled with a transport equation~\cite{das_2019CDC}. By contrast, static output feedback controllers may be implemented efficiently using static gains applied to the measured output.
    Despite the simplicity and reduced implementation costs, even SOF control of linear ordinary differential equations (ODEs) does not have a known convex formulation and is generally believed to be intractable~\cite{toker1995np}. 
Some efforts have been made to develop 
LMI-based conditions for SOF of linear systems with a single delay. For example, in~\cite{liu2015static} and~\cite{parlakci2024robust}, we find approaches based on the bounding of Lyapunov-Krasovskii functionals, where 
the resulting bilinearity is resolved by iteration. 
For positive systems with a measurement delay, 
a convenient restriction on the Lyapunov variables was considered in~\cite{huynh2019static}, which -- together with a change of variables -- convexifies the inequalities for a class of systems that satisfy a rank constraint. Nevertheless, the change of variables requires a generalized inverse to retrieve a non-unique controller. An innovative approach to the problem was taken in~\cite{Fridman2009}, wherein sufficient LMI conditions were proposed for existence of a sliding-mode controller for uncertain systems with time-varying delay and sufficiently small nonlinearities. In none of these cases, however, is the fundamental non-convexity of the SOF problem directly addressed. By contrast, in this paper, we focus first on identifying an efficient LMI approach to SOF for ODEs with demonstrated practical performance, and then generalize that approach 
using a recently developed state-space representation of infinite-dimensional systems.
        
  Specifically, for linear ODEs, an LMI-based SOF synthesis approach 
   was presented in~\cite{peaucelle2001efficient} using the Projection Lemma. 
   The key idea is to require closed-loop stability under full
   state feedback and SOF using a common quadratic Lyapunov function. Combined with the Projection Lemma, this approach allows one to compute the state feedback and the SOF controllers sequentially: 
   a two-step approach where
an LMI is solved in each step. Generalization of this approach to time-delay systems 
requires both efficient methods for state-feedback control, 
and a generalization of the projection lemma. As a result, application of the two-step methodology to time-delay systems has met with limited success -- See, e.g.~\cite{hao2015static}, where it is necessary to resort to an iterative method to get feasible solutions, even for single-delay systems.  
     
Recently, however, significant progress has been made in both state-space representation of time-delay systems and the synthesis 
of full state-feedback controllers for such systems using the Partial Integral Equation (PIE) framework. 
     PIEs can be used to represent systems with multiple discrete and distributed delays, neutral-type systems, and even PDEs with delays~\cite{peet2021representation,peet2021partial,shivakumar2024extension,danilo,jagt2024h}. Furthermore, the algebraic structure of the Partial Integral (PI) operators that parametrize PIEs allows for both a generalization of the LMI for optimal full-state feedback synthesis~\cite{shivakumar2022dual} and, as will be shown in Sec.~\ref{sec:sol}, the Projection Lemma. 

The goal of this paper, then, is to consider the problem of SOF stabilization of linear systems with multiple state and output delays. To achieve this, we first reformulate the problem as SOF of a PIE. Then, we synthesize a stabilizing state-feedback controller, $\mcl K$, using the methodology described in~\cite{shivakumar2022dual}. Then, we generalize the Projection Lemma to Linear PI Operator Inequalities (LPIs). Next, we formulate a bilinear PI operator inequality for SOF control of a PIE. Then, we augment the operator inequality with a second constraint that the Lyapunov variable used in the SOF inequality is also valid in the LPI used to obtain the given state-feedback controller, $\mcl K$. Finally, by applying the Projection Lemma, we show that these two operator inequalities (one of which is bilinear) are implied by a new single operator inequality, which is linear and can be solved by convex optimization. This approach is validated by solving this new LPI to obtain controllers for several time-delay systems.
    
	\textbf{Notation}: $L_2^m[a,b]$ is the space of Lebesgue square-integrable $\R^m$-valued functions on spatial domain $s \in 
    [a,b]$. 
    $W^{m}[a,b]$ is the Sobolev space of $L_2$-differentiable $\R^{m}$--valued functions on $s \in 
    [a,b]$. 
    We denote $\R L_2^{m,n}[a,b]:=\R^m \times L_2^{n}[a,b]$ and $\R W^{m,n}[a,b]:=\R^m \times W^{n}[a,b]$; both the spatial domain and dimensions may be omitted when clear from context. For Hilbert spaces $X,Y$, $\mcl L(X,Y)$ denotes the set of bounded linear operators from $X$ to $Y$.  We use the calligraphic font (e.g. $\mcl{A}$) to represent such bounded linear operators. For any $\mcl A\in \mcl L(Y,X)$, $\mcl A^*$ denotes the adjoint operator and for self-adjoint operators, $\mcl A \succcurlyeq 0$ means $\ip{\mbf x}{\mcl A \mbf x}\ge 0$ for all $\mbf x\in X$.  
	\section{Problem Formulation}\label{sec:introproblem}
	Consider the Delay Differential Equation (DDE):
	\begin{align}\label{DDE}
		\dot x(t) &= A x(t) + \sum_{i=1}^K A_i x(t-\tau_i) + B u(t), \,t\geq0,\\
		y(t) &=C x(t)+\sum_{i=1}^K C_{i} x(t-\tau_i), \notag
	\end{align}
	where $x(t) \in \R^m$, $u(t) \in \R^{n_u}$ is the control input, $y(t)~\in~\R^{n_y}$ is the measured output, $x(t)=x_0(t),$ for all $-\tau_K \leq t \leq 0$ where $x_0 \in W^{m}[- \tau_K, 0]$, and, for convenience, $\tau_{1 }< \tau_{2}< \cdots < \tau_{K}$. $A, A_i, B, C$, and $C_i$ are matrices of appropriate dimensions.
    
	In the SOF problem, we want to find a static gain matrix $L~\in~\R^{n_u \times n_y}$ such that if $u(t)=Ly(t)$, the closed-loop system.\vspace{-3 mm}
    
\begin{align}\label{DDE_CL}
\hspace{-2 mm}	\dot x(t)& = \left( A + B L C \right) x(t)+\sum_{i=1}^K\left( A_i + B L C_i \right) x(t-\tau_i),
\end{align}
 is exponentially stable.
\section{Review: SOF control of ODEs}\label{sec:ODE}
When no delays are present in the model (i.e., $A_i=C_i=0$), the two-step approach of~\cite{peaucelle2001efficient} may be used to find an SOF controller. In this section, we briefly review this method. 

First, we note that in the ODE case, an SOF gain, $L$, stabilizes Eqn.~\eqref{DDE} if and only if there exists a positive definite matrix $P \succ 0$ such that \[\left(A + B L C \right)^T P+  P \left(A +  B L C \right) \prec 0.\] However, this matrix inequality is bilinear in the variables $L$ and $P$. Now, 
suppose that we are given a stabilizing state-feedback controller, $u(t)=Kx(t)$. Then we know there exists some $P\succ0$ such that \[ \left(A + B K \right)^T P+  P \left(A +  B K \right)\prec~0.\]
    
    Next, recall the Projection Lemma of~\cite{pipeleers2009extended}.
 \begin{lem}\label{eliminationODE}\cite{pipeleers2009extended}
     Given a symmetric matrix $Q$ and two
matrices $U$ and $V$ of column dimension $m$; there exists an unstructured matrix $F$ that satisfies
\begin{align}\label{projecionlem1}
     Q+U^{T}F^{T}V+V^{T}FU \prec 0,
\end{align}
if and only if the following projection inequalities with respect to $F$ are satisfied
		\begin{align}\label{projecionlem2}
	N_{u}^{T}Q N_{u}  \prec 0, \quad N_{v}^{T}Q N_{v}  \prec 0,
	\end{align}
    where $N_u$ and $N_v$ are arbitrary matrices whose columns form a basis of the null spaces of $U$ and $V$, respectively.
 \end{lem}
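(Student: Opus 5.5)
Necessity is immediate. If $F$ satisfies \eqref{projecionlem1}, multiply on the left by $N_u^T$ and on the right by $N_u$. Since $UN_u=0$, both cross terms vanish and we get $N_u^TQN_u\prec0$. The same computation with $N_v$, using $VN_v=0$, gives $N_v^TQN_v\prec0$.

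The real content is sufficiency: constructing $F$ from the two projected inequalities. I would argue in three steps.

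\textbf{Step 1: reduce to full-row-rank factors.} First dispose of degenerate cases. If $U$ or $V$ is zero, then the corresponding null-space basis is the identity and $Q\prec0$ already, so $F=0$ works. Otherwise, write $U=T_uU_0$ and $V=T_vV_0$ with $U_0,V_0$ of full row rank. Setting $\tilde F=T_v^TFT_u$, the cross term becomes $U_0^T\tilde F^TV_0+V_0^T\tilde FU_0$. Every $\tilde F$ arises this way when $T_u,T_v$ have full column rank, and the null spaces are unchanged. So we may assume $U$ and $V$ have full row rank.

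\textbf{Step 2: change coordinates and eliminate one variable.} Pick an invertible $T$ adapted to the subspaces $\ker U\cap\ker V$, the complements of this intersection inside $\ker U$ and inside $\ker V$, and a remaining complement. Conjugating by $T$ turns $U$ and $V$ into block selectors, so the cross term $U^TF^TV+V^TFU$ occupies a free off-diagonal block, subject to a linear parametrization. To find a suitable $F$, one could first eliminate the unknown using Finsler's lemma: $N_u^TQN_u\prec0$ holds iff there exists $\mu\in\R$ with $Q-\mu U^TU\prec0$.

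\textbf{Step 3: resolve the remaining coupling via Schur complements.} The cleaner route is the standard one of Gahinet–Apkarian, which is the approach I would take for the coupled case. Work in the coordinates of Step 2. Use a Schur complement on the block restricted to $\ker U+\ker V$; this block is negative definite by the hypotheses together with the intersection block. Then choose $F$ to cancel the off-diagonal coupling, and make the complementary block sufficiently negative by a scaling $\mu$. This step is the main obstacle: one must show that the two separate negativity conditions, on $\ker U$ and on $\ker V$, combine to give negativity on the sum $\ker U+\ker V$ after a suitable choice of the free block. This is done by explicitly solving for the off-diagonal entry of $F$ that annihilates the cross block between $\ker U\setminus\ker V$ and $\ker V\setminus\ker U$. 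The resulting explicit formula for $F$, of the form $F=-\rho\,(VV^T)^{-1}VQ_{\perp}U^T(UU^T)^{-1}$ for large $\rho$, is then verified by completing squares.

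Since the lemma is cited from \cite{pipeleers2009extended}, in the paper I would present only necessity in detail and refer to that source for the construction of $F$.
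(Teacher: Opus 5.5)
Your necessity argument is correct and is exactly the argument the paper relies on. The paper only cites this matrix lemma. For its PI extension (Lemma~\ref{lem:elimination}) it proves only the direction you call necessity, by restricting the inequality to the range of an annihilator so the cross terms vanish. Presenting necessity and citing the construction of $F$ therefore matches the paper.

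The sufficiency sketch, however, fails at its key step. Your Step~1 and the adapted coordinates are fine. Let block 1 span $\ker U\cap\ker V$, blocks 2 and 3 complete it to $\ker U$ and $\ker V$, and block 4 be the rest. Then the cross term contributes a free $(2,3)$ block, free $(2,4)$ and $(3,4)$ blocks, a free symmetric $(4,4)$ block, and nothing else. The hypotheses only give $\bmat{Q_{11}&Q_{12}\\Q_{21}&Q_{22}}\prec0$ and $\bmat{Q_{11}&Q_{13}\\Q_{31}&Q_{33}}\prec0$. So $Q$ need not be negative definite on $\ker U+\ker V$, and cancelling the $(2,3)$ coupling does not make it so. For example, take $U=\bmat{0&0&1}$, $V=\bmat{0&1&0}$, $Q_{11}=Q_{22}=Q_{33}=-1$, $Q_{12}=Q_{13}=0.95$. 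Both projected inequalities hold, yet with the total $(2,3)$ entry set to zero, the Schur complement with respect to the lower $2\times2$ block is $-1+2(0.95)^2>0$.

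The missing idea is to set the total $(2,3)$ block equal to $Q_{21}Q_{11}^{-1}Q_{13}$ rather than $0$. Then the Schur complement with respect to $Q_{11}\prec0$ is block diagonal, with diagonal blocks $Q_{22}-Q_{21}Q_{11}^{-1}Q_{12}$ and $Q_{33}-Q_{31}Q_{11}^{-1}Q_{13}$. These are negative definite precisely by your two hypotheses. Only after that can you take the $(2,4)$ and $(3,4)$ blocks to be zero and the $(4,4)$ block sufficiently negative. Equivalently, choose blocks 2 and 3 to be $Q$-orthogonal to block 1, after which plain cancellation does work.

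The closing formula $F=-\rho(VV^T)^{-1}VQ_{\perp}U^T(UU^T)^{-1}$ ``for large $\rho$'' cannot be correct for any fixed $Q_\perp$. Take $U=\bmat{0&1}$, $V=\bmat{1&0}$, $Q=\bmat{-1&5\\5&-1}$. Both projected inequalities reduce to $-1<0$, but $Q+U^TF^TV+V^TFU=\bmat{-1&5+F\\5+F&-1}$ is negative definite only for $F\in(-6,-4)$.

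In general, if $F=\rho F_0$ worked for all large $\rho$, then $C_0=V^TF_0U+U^TF_0^TV$ would have to satisfy $C_0\preccurlyeq0$. Since $x^TC_0x=0$ for $x\in\ker U\cup\ker V$, this forces $C_0x=0$ on $\ker U+\ker V$. Hence $Q$ itself would have to be negative definite there, which is exactly what is not assumed. The coupling between $\ker U$ and $\ker V$ must be fixed at a finite value, and only the block on the complement of $\ker U+\ker V$ may be driven to $-\infty$. The Finsler remark in Step~2 is true but plays no role and can be dropped.
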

 
Now, we make the conservative assumption that if the system is SOF controllable, there exists some $L$ and $P\succ0$ such that $ \left(A + B L C \right)^T P+  P \left(A +  B L C \right) \prec 0$ AND $ \left(A + B K \right)^T P+  P \left(A +  B K \right)\prec~0$. By the Projection Lemma, one can show that this is equivalent to the existence of 
$L$, $F$, and $P\succ 0$ such that
 \begin{align*}
          \hspace{-3mm}  \bmat{-F-F^T
          &B^TP +FLC\\
  P B+C^TL^TF^T& P \left(A + B K \right)+ \left(A + B K \right)^TP} \prec 0.
\end{align*}

By defining the invertible change of variables $Z=FL$, this nonlinear matrix inequality is equivalent to the LMI
 \begin{align*}
          \hspace{-3mm}  \bmat{-F-F^T
          &B^TP +ZC\\
  P B+C^TZ^T& P \left(A + B K \right)+ \left(A + B K \right)^TP} \prec 0,
\end{align*}
where the resulting SOF gains are given by $L=F^{-1}Z$.
    \section{Review: PIE representation of DDEs and LPIs for Stability and Stabilization}\label{sec:intropie}
    In this section, the PIE representation is briefly introduced, which allows the formulation of an LPI, a generalization of an LMI, to solve the SOF problem for the delay system in Eqn.~\eqref{DDE}. The main result presented in Sec.~\ref{sec:sol} also requires us to define exponential stability and recall two prior results: an LPI for stability analysis and an LPI for full-state feedback stabilization, which are reproduced here for completeness.
    
	\subsection{4-PI Operators}
    
	 In the PIE representation 
     the systems are parametrized by the class of partial integral operators defined in the following. For a 
     comprehensive 
     overview, see Sec. II of~\cite{shivakumar2024extension}.
	
	\begin{defn}\label{def:4PI}
		Given a matrix $P$ and polynomials $Q_1,Q_2,R_0$, $R_1$, and $R_2$, a 4-PI operator $\mcl P=~\fourpi{P}{Q_1}{Q_2}{R_i} \in\mcl L(\R L_2^{m,n},\R L_2^{p,q})$ is such that
		{\small
			\begin{align*}
				&\left(\mcl P \bmat{x\\\mbf x} \right)(s) := \bmat{Px + \int_{-1}^{0}Q_1(\theta)\mbf x(\theta)d\theta\\Q_2(s)x+ \mcl R\mbf{x} (s)}, \text{where}\\
				&\left(\mcl R\mbf x\right)(s)\hspace{-1.5mm}~= ~\hspace{-1.5mm}R_0(s) \mbf x(s) +\hspace{-1.5mm}\int_{-1}^s  \hspace{-1.5mm}R_1(s,\theta)\mbf x(\theta)d \theta+\hspace{-1.5mm}\int_s^0 \hspace{-1.5mm}R_2(s,\theta)\mbf x(\theta)d \theta.
			\end{align*}
		}
		
		Furthermore, the set of 4-PI operators with dimensions $m$, $n$, $q$, $p$ is denoted by $\PI_{q,n}^{p,m}$, or $\PI_4$ when the dimensions are clear from context.
	\end{defn}
	
	If $p=m$ and $q=n$, the set of 4-PI operators is closed under composition, addition, and adjoint; explicit formulae for these operations can be obtained in terms of the polynomial matrices used to parameterize them~\cite{shivakumar2024extension}. Concatenation and inversion of PI operators are also defined in some cases; the reader may find precise definitions and formulae in~\cite{shivakumar2024extension} and~\cite{shivakumar2022dual}. 
    The associated dimensions ($m,n,p,q$) are inherited from the dimensions of the constant matrix $P\in \R^{p \times m}$ and polynomial matrices $Q_1(s) \in \R^{p \times n}$, $Q_2(s) \in \R^{q \times m}$, and $R_0(s),R_1(s,\theta), R_2(s,\theta) \in \R^{q \times n}$. In the case where a dimension is zero, we use $\emptyset$ in place of the associated parameter with zero dimension. 
%
%
		\subsection{PIE Representation of DDEs}
	In this subsection, we show how 4-PI operators can be used to parameterize DDEs. Specifically, we define a Partial Integral Equation (PIE) as a set of integro-differential equations of the form 
    	\begin{align}\label{PIE}
		\partial_t (\mcl T \mbf x(t)) &= \mcl A \mbf x(t)+\mcl B u(t), \\ 
        y(t)&= \mcl C \mbf x(t), \notag
	\end{align}
	where $\mcl T, \mcl A,\mcl B, \mcl C \in \PI_4$ are operators as in Def.~\ref{def:4PI}, and $\mbf x(t)\in~\R L_2^{m,mK}[-1,0]$.

 Now, for any $\{A,B,C,A_i,C_i\}$, define 
    		\begin{align}\label{Box:PIs}
			\mcl T &=\fourpi{I_m}{0}{\bmat{I_{m}&\cdots&I_{m}}^T}{0,0,-I_{mK}},\\ \mcl A &=\fourpi{A+\sum_{i=1}^K Ai}{-\bmat{A_1 \cdots A_K}}{0}{I_{\tau},0,0},\notag \\\mcl C &=\fourpi{C+\sum_{i=1}^K Ci}{-\bmat{C_1 \cdots C_K}}{\emptyset}
            {\emptyset},\notag\\
            \mcl B&=\fourpi{B}{\emptyset}{0}{\emptyset}, \, I_{\tau}=\text{diag}(I_m\tau_1^{-1},\ldots,I_m\tau_K^{-1}).
			\notag
		\end{align}
        
        Then the solutions of Eqn.~\eqref{DDE} and Eqn.~\eqref{PIE} are equivalent in the following sense~\cite{peet2021representation}.
	\begin{lem}\label{LemPIEDDE}
		For given $A,A_i,B,C,C_i,\tau_i$, suppose $\{\mcl{T,A,B,C}\}$ are as defined in Eqn.~\eqref{Box:PIs}. For any $\mbf x_{0} \in \R L_{2}^{m,mK}[-1,0]$ and $u \in L_{2}[0,\infty)$, if $x$, $y$ satisfies the DDE in Eqn.~\eqref{DDE} under input $u$ and initial condition $x_{0}$, then $\mbf x$ and $y$ satisfy Eqn.~\eqref{PIE}, where\vspace{-1 mm}
        \[\mbf x(t)=(x(t),\dot x(t+s\tau_1), \dots,\dot x(t+s \tau_K)),\] 
        for all $t \geq 0, s \in [-1,0]$, under input $u$ and initial conditions $x(0)=x_0(0)$ and $\dot x(s\tau_{i})$. 
        Likewise, if 
        \[\mbf x(t)=(x(t), \partial_s \phi_1(t,s),\dots, \partial_s \phi_K(t,s)),\]
        and $y$ satisfy Eqn.~\eqref{PIE}, under input $u$ and initial condition $ \mbf x(0)\hspace{-2 mm}~=~\hspace{-2 mm}(x(0),\partial_s \phi_1(0,s),\dots, \partial_s \phi_K(0,s)))$, then $x(t)$ satisfy Eqn.~\eqref{DDE} under input $u$ and initial condition $x_{0}(t)\hspace{-1 mm}~=~\hspace{-1 mm}\phi_K(0,t/\tau_K)$, for all $t \in [-\tau_{k},0]$, where $x_0(0)\hspace{-1 mm}~=~\hspace{-1 mm}x(0)$.
	\end{lem}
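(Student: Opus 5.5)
The proof is a direct substitution argument in both directions, built on the fundamental theorem of calculus. The 4-PI operators in Eqn.~\eqref{Box:PIs} are designed so that $\mcl T\mbf x$ rebuilds the delayed states from $x(t)$ and the derivative segments. The identity we rely on is $x(t-\tau_i)=x(t)-\int_{-1}^0 \tau_i\,\dot x(t+\theta\tau_i)\,d\theta$. I would first record how $\mcl T$, $\mcl A$, $\mcl B$, $\mcl C$ act on a generic $\mbf x=(x,\mbf x_1,\dots,\mbf x_K)$, reading off Def.~\ref{def:4PI}:
\begin{itemize}
\item $(\mcl T\mbf x)(s)=\big(x,\;x-\int_{-1}^s\mbf x_i(\theta)d\theta\big)_{i}$;
\item $\mcl A\mbf x=\big((A+\sum_i A_i)x-\sum_i A_i\int_{-1}^0\mbf x_i,\;\tau_i^{-1}\mbf x_i(s)\big)$;
\item $\mcl C\mbf x=(C+\sum_iC_i)x-\sum_iC_i\int_{-1}^0\mbf x_i$;
\item $\mcl B u=(Bu,0)$.
\end{itemize}
A normalization caveat applies throughout. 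As written, the identity only works if the $i$-th distributed component is scaled as $\tau_i\dot x(t+s\tau_i)=\partial_s x(t+s\tau_i)$, or if the $\tau_i$ factors are absorbed consistently in $I_\tau$ and the integral kernels. I would fix this normalization at the outset so that $\int_{-1}^0\mbf x_i(\theta)d\theta=x(t)-x(t-\tau_i)$ holds exactly.

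\textbf{Forward direction.} Let $x$ solve Eqn.~\eqref{DDE} with $x_0\in W^m$, and define $\mbf x(t)$ as in the statement.
\begin{itemize}
\item \emph{Output.} The fundamental-theorem identity gives $\sum_i C_i x(t-\tau_i)=\sum_i C_i\big(x(t)-\int_{-1}^0\mbf x_i\big)$, so $y=\mcl C\mbf x$.
\item \emph{Finite-dimensional row.} Since $\mcl T$ has top row $x(t)$, this row reads $\dot x(t)=Ax+\sum_iA_i x(t-\tau_i)+Bu$, which is exactly the DDE after the same substitution.
\item \emph{Distributed rows.} We have $(\mcl T\mbf x)_i(t,s)=x(t)-\int_{-1}^s\mbf x_i(t,\theta)d\theta=x(t+s\tau_i)$. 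Differentiating in $t$ gives $\dot x(t+s\tau_i)$, which matches $\tau_i^{-1}\mbf x_i(t,s)$ under the chosen normalization.
\end{itemize}

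\textbf{Converse direction.} Given a solution $\mbf x$ of Eqn.~\eqref{PIE} with $\mbf x_i=\partial_s\phi_i$, set $\phi_i(t,s):=x(t)-\int_{-1}^s\mbf x_i(t,\theta)d\theta$, fixing the constant by $\phi_i(t,0)=x(t)$. The distributed rows of the PIE then say $\partial_t\phi_i=\tau_i^{-1}\partial_s\phi_i$, a transport equation with boundary value $\phi_i(t,0)=x(t)$. Solving it along characteristics:
\begin{itemize}
\item for $t+s\tau_i\ge 0$, $\phi_i(t,s)=x(t+s\tau_i)$;
\item otherwise $\phi_i(t,s)=\phi_i(0,s+t/\tau_i)$.
\end{itemize}
Define $x$ on $[-\tau_K,0]$ by $x_0(t)=\phi_K(0,t/\tau_K)$. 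Then:
\begin{itemize}
\item \emph{State equation.} The top row becomes $\dot x=Ax+\sum_iA_i\phi_i(t,-1)+Bu$.
\item \emph{Output.} The output becomes $y=Cx+\sum_iC_i\phi_i(t,-1)$.
\item \emph{Closing.} It remains to show $\phi_i(t,-1)=x(t-\tau_i)$ for all $t\ge 0$.
\end{itemize}

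\textbf{Main obstacle.} The hardest step is the initial-data consistency in the converse. The memory of each $\phi_i$ must agree with the single history $x_0$ built from $\phi_K$, i.e.\ $\phi_i(0,s)=\phi_K(0,s\tau_i/\tau_K)$. Without this, the delayed terms for $i<K$ would not coincide with the true delayed state during $t\in[0,\tau_i]$. I would handle it by restricting attention to initial conditions generated by a common history, which is exactly what the statement's choice $x_0(t)=\phi_K(0,t/\tau_K)$ implicitly assumes. I would state this compatibility condition explicitly, and then invoke uniqueness of solutions of the transport equations to conclude. A secondary technical point is regularity: we need $x_0\in W^m$ so that the derivative segments lie in $L_2$, and $\mbf x$ must be regular enough to justify differentiating $\mcl T\mbf x$ in $t$. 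I would treat both via the standard classical or mild solution notion used in~\cite{peet2021representation}.
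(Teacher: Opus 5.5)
\textbf{The error: you read $\mcl T$ with the wrong integral, and this breaks your central identity.} In $\{0,0,-I_{mK}\}$ the nonzero kernel is $R_2=-I_{mK}$. By Def.~\ref{def:4PI}, $R_2$ multiplies $\int_s^0$, so $(\mcl T\mbf x)_i(s)=x-\int_s^0\mbf x_i(\theta)\,d\theta$, not $x-\int_{-1}^s\mbf x_i(\theta)\,d\theta$. With your formula the identity fails. For $\mbf x_i(t,\theta)=\partial_\theta x(t+\theta\tau_i)$ one gets $x(t)-\int_{-1}^s\mbf x_i(t,\theta)\,d\theta=x(t)+x(t-\tau_i)-x(t+s\tau_i)$, not $x(t+s\tau_i)$, so your distributed-row check in the forward direction does not hold as written. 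In the converse, the same slip makes your definition $\phi_i:=x-\int_{-1}^s\mbf x_i$ contradict the properties you then use:
it gives $\partial_s\phi_i=-\mbf x_i$ and $\phi_i(t,-1)=x(t)$, rather than $\phi_i(t,0)=x(t)$;
the distributed rows become $\partial_t\phi_i=-\tau_i^{-1}\partial_s\phi_i$;
the top row produces $\phi_i(t,0)$ rather than $\phi_i(t,-1)$.
The repair is to set $\phi_i(t,s)=x(t)-\int_s^0\mbf x_i(t,\theta)\,d\theta$. Then $\partial_s\phi_i=\mbf x_i$, $\phi_i(t,0)=x(t)$, and $\phi_i(t,-1)=x(t)-\int_{-1}^0\mbf x_i(t,\theta)\,d\theta$, which is exactly what the $-[A_1\ \cdots\ A_K]$ and $-[C_1\ \cdots\ C_K]$ blocks of $\mcl A$ and $\mcl C$ encode. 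The distributed rows then read $\partial_t\phi_i=\tau_i^{-1}\partial_s\phi_i$, and your characteristic solution and closing step go through unchanged.

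\textbf{With that correction the argument is sound, and it is more careful than the statement.} The paper gives no proof of its own; it defers to \cite{peet2021representation}, which reaches the PIE through an intermediate ODE--transport-PDE form using the same fundamental-theorem-of-calculus reconstruction. Both of your caveats are genuine defects of the lemma as printed.
First, with $\mcl T,\mcl A,\mcl C$ fixed as in Eqn.~\eqref{Box:PIs}, the distributed states must be $\mbf x_i(t,s)=\tau_i\dot x(t+s\tau_i)$. The same scaling is tacitly needed for the identity $\mcl T\mbf x(t)=(x(t),x(t+s\tau_i))$ used in the proof of Lem.~\ref{thmstability}.
Second, for $K\ge 2$ the converse is false without your compatibility condition $\phi_i(0,s)=\phi_K(0,s\tau_i/\tau_K)$. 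Take $m=1$, $K=2$, $A=B=0$, $A_1=1$, $A_2=0$, $\tau_1=1$, $\tau_2=2$, $x(0)=0$, $\phi_1(0,s)=-s$, $\phi_2(0,\cdot)\equiv 0$. The PIE gives $\dot x(t)=1-t$ on $[0,1]$, whereas the DDE with the prescribed history $x_0=\phi_2(0,\cdot/2)\equiv 0$ gives $x\equiv 0$.
So what you will have proved is the lemma with the $\tau_i$-scaled state and compatible initial data, which is the correct version.
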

    
Lem.~\ref{LemPIEDDE} implies that there is a one-to-one map between solutions of Eqn.~\eqref{DDE} with static output feedback controller $u(t)=Ly(t)$ and solutions of the PIE defined as
	\begin{align}\label{PIE_CLo}
		\partial_t (\mcl T \mbf x(t))& = \left( \mcl A + \mcl B L \mcl C \right)\mbf x(t).
	\end{align}
In the following subsection, we define exponentially stability of a PIE, show that stability of a PIE implies stability of the corresponding DDE, and provide an LPI condition for stability and full-feedback stabilization of a PIE. 
    \subsection{LPIs for Exponential Stability and Stabilization}
	Having established equivalence of solutions between a multi-delay system and its associated PIE, let us now consider the questions of stability and stabilization. 
	\begin{defn}[Exponential Stability of a DDE]\label{def:stability}
		We~say~that the DDE in Eqn.~\eqref{DDE}, with $u=y=0$, is exponentially stable with decay rate $\alpha>0$ if there is a constant $M$ such that for any initial condition $x_0 \in \R W$, if $x(t)$ satisfies Eqn.~\eqref{DDE}, then $\norm{x(t)}_{\R^m} \leq M  \norm{x_0}_{L_{\infty}}e^{-\alpha t}$ for all $t\geq 0$, where $\norm{x_0}_{L_{\infty}} =\max\limits_{-\tau_K\leq t \leq 0} \norm{x_0(t)}_{\R^m}$.
	\end{defn}
	
	\begin{defn}[Exponential Stability of a PIE]\label{def:stabilityPIE} 
    
    We~say~that the PIE Eqn.~\eqref{PIE} is exponentially stable with decay rate $\alpha >0$ if there is a constant $M$ such that for any initial state $\mcl T \mbf x(0)$, with $u=y=0$, the solution $\mbf x(t)$ satisfies $\norm{\mcl T \mbf x(t)}_{\R L_{2}}\leq M  \norm{\mcl T \mbf x(0)}_{\R L_{2}} e^{-\alpha t}$, for all $t \geq 0$.
	\end{defn}
	
	Lem.~\ref{thmstability} shows that stability of the PIE is sufficient for stability of the delay system. 
	
	\begin{lem}{\label{thmstability}}
		For given $A,A_i,\tau_i$, suppose $\{\mcl{T,A}\}$ are as defined in Eqn.~\eqref{Box:PIs}. Then, if Eqn.~\eqref{PIE} is exponentially stable with decay rate $\alpha >0$, Eqn.~\eqref{DDE} is exponentially stable with decay rate $\alpha >0$. 
	\end{lem}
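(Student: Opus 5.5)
The plan is to push the exponential bound through the solution map of Lem.~\ref{LemPIEDDE}. The key observation is that the operator $\mcl T$ in Eqn.~\eqref{Box:PIs} is simply the map that recovers the delayed history of $x$ from the PIE state. Once that is established, the estimate in Def.~\ref{def:stabilityPIE} controls $\norm{x(t)}$ from above, and $\norm{\mcl T\mbf x(0)}$ is controlled by $\norm{x_0}_{L_\infty}$.

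First, fix $x_0\in W^m[-\tau_K,0]$ and let $x$ solve Eqn.~\eqref{DDE} with $u=0$. By Lem.~\ref{LemPIEDDE}, the PIE state $\mbf x(t)=(x(t),\partial_s\phi_1(t,\cdot),\dots,\partial_s\phi_K(t,\cdot))$, with $\phi_i(t,s)=x(t+s\tau_i)$ and hence $\partial_s\phi_i(t,s)=\tau_i\dot x(t+s\tau_i)$, solves Eqn.~\eqref{PIE}. Since $x_0\in W$, we have $\dot x_0\in L_2$, so $\mbf x(0)\in\R L_2^{m,mK}[-1,0]$ is an admissible initial state.

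Second, I would compute $\mcl T\mbf x(t)$ directly from Definition~\ref{def:4PI} with $P=I_m$, $Q_1=0$, $Q_2=[I_m\cdots I_m]^T$, $R_0=R_1=0$ and $R_2=-I$. The first component is $x(t)$. The $i$-th distributed component is
\[
x(t)-\int_s^0\partial_\theta\phi_i(t,\theta)\,d\theta = x(t)-\bigl(x(t)-x(t+s\tau_i)\bigr)=x(t+s\tau_i).
\]
Hence $\mcl T\mbf x(t)=(x(t),x(t+s\tau_1),\dots,x(t+s\tau_K))$. This identity is the main step to check carefully. It depends on matching the scaling $\tau_i$ in the PIE state with the $I_\tau$ in $\mcl A$, and on the fundamental theorem of calculus for $W$-functions, which holds because absolutely continuous representatives exist.

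Third, I would combine the bounds. Exponential stability of the PIE gives
\[
\norm{x(t)}_{\R^m}\le\norm{\mcl T\mbf x(t)}_{\R L_2}\le M\norm{\mcl T\mbf x(0)}_{\R L_2}e^{-\alpha t}.
\]
For the initial state,
\[
\norm{\mcl T\mbf x(0)}_{\R L_2}^2=\norm{x_0(0)}^2+\sum_{i=1}^K\int_{-1}^0\norm{x_0(s\tau_i)}^2ds\le(1+K)\norm{x_0}_{L_\infty}^2,
\]
because $s\tau_i\in[-\tau_K,0]$. Therefore $\norm{x(t)}\le M\sqrt{1+K}\,\norm{x_0}_{L_\infty}e^{-\alpha t}$, which is exponential stability of Eqn.~\eqref{DDE} with the same decay rate $\alpha$ in the sense of Def.~\ref{def:stability}.
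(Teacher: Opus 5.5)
Your proposal is correct and follows the paper's proof step for step. You lift the DDE solution to a PIE solution via Lem.~\ref{LemPIEDDE}, identify $\mcl T\mbf x(t)$ with the history $(x(t),x(t+s\tau_1),\dots,x(t+s\tau_K))$, use $\norm{x(t)}_{\R^m}\le\norm{\mcl T\mbf x(t)}_{\R L_2}$, and bound $\norm{\mcl T\mbf x(0)}_{\R L_2}$ by a multiple of $\norm{x_0}_{L_\infty}$.

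Your version is slightly more careful than the paper's in two places:
\begin{itemize}
\item The factor $\tau_i$ in $\partial_s\phi_i(t,s)=\tau_i\dot x(t+s\tau_i)$ is what makes $\mcl T\mbf x(t)$ equal the history and keeps it consistent with $I_\tau$ in $\mcl A$. The paper's unscaled $\dot x(t+s\tau_i)$ only works when $\tau_i=1$.
\item Using the Hilbert norm gives the constant $M\sqrt{1+K}$ rather than the paper's $M(1+K)$.
\end{itemize}
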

	\proof
	Suppose that $\{ \mcl{A,T}\}$ is exponentially stable as per Def.~\ref{def:stabilityPIE}. Let $x(t)$ be a solution of the DDE in Eqn.~\eqref{DDE}. Then, if we define $\mbf x(t)=(x(t),\dot x(t+s\tau_1), \dots,\dot x(t+s \tau_K))$, we have that $\mcl T \mbf x(t)=(x(t),x(t+s\tau_1), \dots,x(t+s \tau_K))$, and 
    as per Lem.~3, 
    $\mbf x$ satisfies Eqn.~\eqref{PIE}. Since  $\{ \mcl{A,T}\}$ is exponentially stable, this implies $\norm{\mcl T\mbf x(t)}_{\R L_2}  \leq M  \norm{\mcl T\mbf x(0)}_{\R L_2} e^{-\alpha t}$.
    
    It follows from the norm definitions that  $\norm{x(t)}_{\R^{m}} \leq \norm{\mcl T \mbf x(t)}_{\R L_{2}}$ for all $t\geq0$. Then, 
	\begin{align*}
		\norm{x(t)}_{\R^{m}}  \leq M  \norm{\mcl T \mbf x(0)}_{\R L_2} e^{-\alpha t},
	\end{align*}
	where $\norm{\mcl T \mbf x(0)}_{\R L_2} =\norm{x(0)}_{\R^m}+\sum_{i=0}^K\norm{x_{i0}}_{L_2}$, where $x_{i0}(s)=x(s\tau_i)$, for $s \in [-\tau_i,0]$. But, from Lem.~\ref{LemPIEDDE}, $\norm{x(0)}_{\R^m}=\norm{x_0(0)}_{\R^m} \leq\norm{x_0}_{L_{\infty}}$ and $\norm{x_{i0}}_{L_2} \leq \norm{x_{i0}}_{L_{\infty}} \leq \norm{x_0}_{L_{\infty}}$. Therefore,
    	\begin{align*}
		\norm{x(t)}_{\R^{m}}  \leq M (1+K) \norm{x_0}_{L_{\infty}} e^{-\alpha t}.
	\end{align*}
	\endproof
	
	The LPI for full-state feedback controller synthesis can be found in~\cite{shivakumar2022dual}. For completeness, we recall the result here.
	
	\begin{lem}[~\cite{shivakumar2022dual}]\label{lem:stability}
		Given $\mcl{A, T} \in \PI_4$, suppose there exist constants $\delta, \alpha> 0$ and $\mcl P= \mcl P^* \in \PI_4$ such that $\mcl P \succcurlyeq \delta I$ and
		\begin{align}\label{stabilityineq}
			\mcl A^* \mcl P& \mcl T+\mcl T^* \mcl P \mcl A \preccurlyeq -2\alpha \mcl T^* \mcl P \mcl T.
		\end{align}
		Then, the PIE Eqn.~\eqref{PIE}, with $u=y=0$, defined by $\{\mcl{A,T}\}$, is exponentially stable with decay rate $\alpha$.
	\end{lem}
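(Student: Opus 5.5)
We prove Lem.~\ref{lem:stability} with a Lyapunov functional built on the state $\mcl T\mbf x(t)$. Define
\[
V(t)=\ip{\mcl T\mbf x(t)}{\mcl P\,\mcl T\mbf x(t)}_{\R L_2},
\]
where $\mbf x(t)$ solves Eqn.~\eqref{PIE} with $u=0$. The argument has three steps: bound $V$ from both sides, show it decays exponentially, and convert the decay back to $\norm{\mcl T\mbf x(t)}$ as required by Def.~\ref{def:stabilityPIE}.

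\textbf{Step 1: two-sided bounds on $V$.} Since $\mcl P\succcurlyeq\delta I$, we have
\[
V(t)\ge \delta\norm{\mcl T\mbf x(t)}^2 .
\]
Because $\mcl P\in\PI_4$ has polynomial parameters on a compact domain, it is bounded with some norm $\norm{\mcl P}$, so
\[
V(t)\le \norm{\mcl P}\,\norm{\mcl T\mbf x(t)}^2 .
\]

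\textbf{Step 2: differentiate $V$.} For a solution, $\partial_t(\mcl T\mbf x)=\mcl A\mbf x$. Using self-adjointness of $\mcl P$ and the product rule for the inner product,
\[
\dot V(t)=\ip{\mcl A\mbf x}{\mcl P\mcl T\mbf x}+\ip{\mcl T\mbf x}{\mcl P\mcl A\mbf x}=\ip{\mbf x}{(\mcl A^*\mcl P\mcl T+\mcl T^*\mcl P\mcl A)\mbf x}.
\]
Applying the operator inequality \eqref{stabilityineq} to the vector $\mbf x(t)$ gives
\[
\dot V(t)\le -2\alpha\ip{\mbf x}{\mcl T^*\mcl P\mcl T\mbf x}=-2\alpha V(t).
\]
By Gr\"onwall's inequality, $V(t)\le e^{-2\alpha t}V(0)$.

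\textbf{Step 3: conclude.} Combining the bounds from Step 1 with the decay from Step 2,
\[
\delta\norm{\mcl T\mbf x(t)}^2\le e^{-2\alpha t}\norm{\mcl P}\,\norm{\mcl T\mbf x(0)}^2 .
\]
Taking square roots gives the estimate of Def.~\ref{def:stabilityPIE} with decay rate $\alpha$ and $M=\sqrt{\norm{\mcl P}/\delta}$.

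\textbf{Main obstacle.} The only delicate point is justifying the derivative computation in Step 2. This needs $\mcl T\mbf x(t)$ to be differentiable in $t$ with derivative $\mcl A\mbf x(t)$ in $\R L_2$. That holds for classical solutions, such as those arising from $W$-regular initial data, which is the notion of solution used in Lem.~\ref{LemPIEDDE}. If the result is needed for general initial states $\mcl T\mbf x(0)$, I would first prove the estimate for such regular data and then extend it by density, since $M$ does not depend on the initial condition.
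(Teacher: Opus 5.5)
Your proof is correct and is essentially the argument the paper relies on. The paper defers to \cite{shivakumar2022dual}, and its proof of Thm.~\ref{stage2} names the Lyapunov functional $\ip{\mcl T\mbf x}{\mcl P\mcl T\mbf x}$; your proof uses the same functional, bounded by $\delta\norm{\mcl T\mbf x}^2$ and $\norm{\mcl P}\norm{\mcl T\mbf x}^2$, differentiated along solutions, and closed with Gr\"onwall. On your regularity caveat: the PIE notion of solution already requires $\mcl T\mbf x(t)$ to be differentiable with derivative $\mcl A\mbf x(t)$, so Step~2 is justified without the density argument.
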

	\proof
    The proof can be found in~\cite{shivakumar2022dual}.
	\endproof
%

\begin{lem}[~\cite{shivakumar2022dual}]\label{stage1}
	Given $\mcl{A,B,T} \in \PI_4$, suppose there exist $\mcl P= \mcl P^*$, $\mcl Z \in \PI_4$, and constants $\delta, \alpha > 0$ such that $\mcl P\succcurlyeq \delta I$ and
	\begin{align}{\label{LPIstage1}}
		\mcl A \mcl P \mcl T^* +\mcl T \mcl P \mcl A^* +\mcl B \mcl Z \mcl T^*+ \mcl T \mcl Z^* \mcl B^* &\preccurlyeq- 2 \alpha \mcl T \mcl P \mcl T^*.
	\end{align}
	Then, the PIE $\partial_t(\mcl T \mbf x(t))=\left(\mcl{A+BK} \right)\mbf x(t)$, 
    where $\mcl K = \mcl Z \mcl P^{-1}$, is exponentially stable with decay rate $\alpha$ \end{lem}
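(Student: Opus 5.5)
The natural route is duality: convert the dual LPI~\eqref{LPIstage1} into a primal stability inequality of the form~\eqref{stabilityineq} for the closed-loop operators. First, set $\mcl K=\mcl Z\mcl P^{-1}$. Since $\mcl P\succcurlyeq\delta I$, $\mcl P$ is coercive and self-adjoint, so $\mcl P^{-1}$ exists as a bounded self-adjoint operator; by the PI inversion formulae cited from~\cite{shivakumar2022dual}, it is again a PI operator. With $\mcl Z=\mcl K\mcl P$, inequality~\eqref{LPIstage1} can be rewritten as
\[
(\mcl A+\mcl B\mcl K)\mcl P\mcl T^*+\mcl T\mcl P(\mcl A+\mcl B\mcl K)^*\preccurlyeq -2\alpha\,\mcl T\mcl P\mcl T^*.
\]
This is exactly the dual stability inequality for the closed-loop PIE $\{\mcl T,\mcl A+\mcl B\mcl K\}$.

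Second, show that this dual inequality gives exponential stability. Since the primal-form Lemma~\ref{lem:stability} cannot be applied directly, I would construct a Lyapunov functional in the dual variables, following~\cite{shivakumar2022dual}: consider the dual PIE $\partial_t(\mcl T^*\mbf z)=(\mcl A+\mcl B\mcl K)^*\mbf z$ with $V(\mbf z)=\ip{\mcl T^*\mbf z}{\mcl P\mcl T^*\mbf z}$. Differentiating along trajectories and using the dual inequality gives $\dot V\le-2\alpha V$. The bounds $\delta\norm{\mcl T^*\mbf z}^2\le V\le\norm{\mcl P}\norm{\mcl T^*\mbf z}^2$ then give exponential stability of the dual system with rate $\alpha$.

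Third, transfer stability from the dual system back to the primal closed-loop PIE $\partial_t(\mcl T\mbf x)=(\mcl A+\mcl B\mcl K)\mbf x$. This step is the main obstacle, because it relies on the structure of $\mcl T$ in~\eqref{Box:PIs}: in particular, that $\mcl T$ is injective with dense range and that the closed-loop operator generates a semigroup. The statement needed is the standard fact that a $C_0$-semigroup and its adjoint semigroup have equal norms, so exponential decay of one implies exponential decay of the other at the same rate. I would realize the primal PIE on the state $\mcl T\mbf x$ as the abstract system with generator $(\mcl A+\mcl B\mcl K)\mcl T^{-1}$ on the range of $\mcl T$, and identify the dual PIE with its adjoint. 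Equivalently, one can bypass the dual system by looking for a primal Lyapunov operator: take the candidate $\mcl P^{-1}$ and show that $\mcl T^*\mcl P^{-1}\mcl T$ satisfies~\eqref{stabilityineq} on the relevant domain, using a congruence by $\mcl P^{-1}$ together with an argument that $\mcl T$ is invertible on appropriate subspaces. Either way, the duality argument of~\cite{shivakumar2022dual} supplies this step, and I would cite it for the technical semigroup details rather than reprove them.
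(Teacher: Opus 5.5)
Your overall route (substitute $\mcl Z=\mcl K\mcl P$, certify the dual closed-loop PIE with $V=\ip{\mcl T^*\mbf z}{\mcl P\mcl T^*\mbf z}$, then transfer stability to the primal PIE by duality) is the argument of \cite{shivakumar2022dual}. The paper defers the whole proof to that reference. Your Steps 1 and 2 are correct; for this lemma you only need $\mcl P^{-1}$ bounded, not PI.

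The justification you sketch for Step 3, however, does not close as stated. Let $\mcl G=(\mcl A+\mcl B\mcl K)\mcl T^{-1}$ generate $S(t)$ on $D(\mcl G)=\mathrm{Ran}\,\mcl T$. One checks that $\mcl G^*=\mcl T^{-*}(\mcl A+\mcl B\mcl K)^*$, so $S(t)^*$ does generate the dual PIE solutions in the variable $\mbf z$. But Step 2 bounds $\norm{\mcl T^*\mbf z(t)}$ by $\norm{\mcl T^*\mbf z(0)}$, not $\norm{\mbf z(t)}$ by $\norm{\mbf z(0)}$, so ``$\norm{S(t)}=\norm{S(t)^*}$'' cannot be invoked directly.

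What Step 2 actually controls is $\tilde S(t)^*$, where $\tilde S(t)=\mcl T^{-1}S(t)\mcl T$ is the semigroup of the PIE state, with generator $\mcl T^{-1}(\mcl A+\mcl B\mcl K)$. From $S(t)\mcl T=\mcl T\tilde S(t)$ you get $\mcl T^*S(t)^*=\tilde S(t)^*\mcl T^*$. Since $\mcl T^*D(\mcl G^*)$ is dense, this gives $\norm{\tilde S(t)}=\norm{\tilde S(t)^*}\le Me^{-\alpha t}$. That is decay of $\norm{\mbf x(t)}$, not of $\norm{\mcl T\mbf x(t)}$ as Def.~\ref{def:stabilityPIE} requires. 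One more step is needed:
by the closed graph theorem, $\mcl T$ is an isomorphism from $\R L_2$ onto $D(\mcl G)$ with the graph norm; and
$S(t)$ restricted to $D(\mcl G)$ is similar to $S(t)$ on $\R L_2$ via $(\lambda-\mcl G)^{-1}$.
Together these give $\norm{S(t)}\le M'e^{-\alpha t}$ with the same $\alpha$.

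Your ``equivalent'' primal shortcut is not valid. The dual inequality does not imply \eqref{stabilityineq} with $\mcl P$ replaced by $\mcl P^{-1}$, even for matrices. If $T$ is an invertible matrix, congruence of $APT^T+TPA^T\preccurlyeq-2\alpha TPT^T$ produces the primal certificate $T^{-T}P^{-1}T^{-1}$, not $P^{-1}$.

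Concretely, take $T=\mathrm{diag}(1,\sqrt{0.1})$, $P=I$, and $A=-TM$ with $M=\bmat{1&1.5\\0&1}$. This satisfies the dual inequality with $\alpha=0.25$, since $M+M^T\succcurlyeq\tfrac12 I$. Yet $A^TT+T^TA=-\bmat{2&1.5\\1.5&0.2}$ is indefinite.

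In the PIE setting the correct primal certificate $\mcl T^{-*}\mcl P^{-1}\mcl T^{-1}$ is unbounded: it is the functional $\ip{\mbf x}{\mcl P^{-1}\mbf x}$ on the PIE state. That is exactly why one has to go through the dual system and the similarity argument above rather than a congruence.
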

\proof
 The proof can be found in~\cite{shivakumar2022dual}. 
\endproof
\vspace{-2mm}

	\section{Main Results}\label{sec:sol}
After introducing a state-space representation of delay systems, showing the equivalence of exponential stability between the representations, and recalling an LPI for stability, we are ready to present the main results of this work. First, we partially extend the Projection Lemma from matrices to 4-PI operators, proving only sufficiency. Then, we use this lemma to derive a convex optimization solution for the SOF control of time-delay systems in two steps.
	\subsection{Projection Lemma for PIs}
	We propose a sufficient condition that extends Lem.~\ref{eliminationODE}, widely applied for analysis and control of ODEs, to PI operators. 
    However, we need to first define the right annihilator of a 4-PI operator.
	
	\begin{defn}\label{def:anihilator}
		Given $\mcl R \in \PI_{q,n}^{p,m}$
        , we say $\mcl S\in \PI_{n,l}^{m,k}$ 
        is a right annihilator of $\mcl R$ if $\mcl R (\mcl S \mbf x) = 0 \in \R L_2^{p,q}$, for all $\mbf x~\in~\R L_2^{k,l}$ and $\mcl R^{*}\mcl R \succcurlyeq \epsilon I$ for some constant $\epsilon > 0$.
	\end{defn}
        
	Then, sufficiency of the Projection Lemma, Lem.~\ref{eliminationODE}, can be extended to the algebra of 4-PI operators as follows. 
	
	\begin{lem}\label{lem:elimination}
		Consider $\mcl V \in \PI_{q,n}^{p,m}$, $\mcl U \in \PI_{s,n}^{r,m}$, and $\mcl Q =\mcl Q^* \in \PI_{n,n}^{m,m}$. Let $\mcl R$ and $\mcl S$ be right annihilators of $\mcl U$ and $\mcl V$, respectively. Then, if there exists $\mcl X \in \PI_4$, of appropriate dimensions, such that
		\begin{align}\label{proj2}
			\mcl{Q+U^*XV+V^*X^*U} \preccurlyeq 0,
		\end{align}
		then the two LPIs hold:
		\begin{align}\label{proj1}
			(\mcl S)^*\mcl Q \mcl S \preccurlyeq 0, \quad
			(\mcl R)^*\mcl Q \mcl R \preccurlyeq 0.
		\end{align}
	\end{lem}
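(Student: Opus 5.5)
The plan is the direct congruence argument used for sufficiency in the matrix case (Lem.~\ref{eliminationODE}), carried out in the operator setting with adjoints and inner products in place of transposes. The only facts needed are: (i) the annihilator property $\mcl U\mcl R=0$ and $\mcl V\mcl S=0$ from Def.~\ref{def:anihilator}; (ii) the identity $\ip{\mcl S \mbf x}{\mcl M \mcl S \mbf x} = \ip{\mbf x}{\mcl S^*\mcl M\mcl S \mbf x}$, valid for bounded operators on the Hilbert spaces $\R L_2$; and (iii) closure of 4-PI operators under composition, addition and adjoint, so that every expression remains in $\PI_4$ with consistent dimensions.

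\textbf{Step 1.} Fix an arbitrary $\mbf x\in\R L_2^{k,l}$ in the domain of $\mcl S$, and set $\mbf z=\mcl S\mbf x\in \R L_2^{m,n}$. Applying \eqref{proj2} to $\mbf z$ gives
\[
0\ge \ip{\mbf z}{(\mcl Q+\mcl U^*\mcl X\mcl V+\mcl V^*\mcl X^*\mcl U)\mbf z}.
\]

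\textbf{Step 2.} Expand the cross terms using adjoints:
\[
\ip{\mbf z}{\mcl U^*\mcl X\mcl V\mbf z}=\ip{\mcl U\mbf z}{\mcl X\mcl V\mbf z},\qquad
\ip{\mbf z}{\mcl V^*\mcl X^*\mcl U\mbf z}=\ip{\mcl X^*\mcl U\mbf z}{\mcl V\mbf z}.
\]
Since $\mcl V\mbf z=\mcl V\mcl S\mbf x=0$, both cross terms vanish. Hence
\[
0\ge\ip{\mcl S\mbf x}{\mcl Q\mcl S\mbf x}=\ip{\mbf x}{\mcl S^*\mcl Q\mcl S\mbf x}.
\]
Because $\mbf x$ was arbitrary, this gives $\mcl S^*\mcl Q\mcl S\preccurlyeq0$.

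\textbf{Step 3.} Repeat the argument with $\mbf z=\mcl R\mbf x$. Now $\mcl U\mbf z=0$, which kills both cross terms in the same way, and yields $\mcl R^*\mcl Q\mcl R\preccurlyeq 0$.

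\textbf{Possible obstacle.} The argument is essentially one line, so I expect no real obstacle. The only care needed is bookkeeping. Each composition must be dimensionally well defined: the codomain of $\mcl S$ must be the domain $\R L_2^{m,n}$ of $\mcl Q$, $\mcl U$ and $\mcl V$. The adjoints must also be taken with respect to the $\R L_2$ inner product, as in the paper's Notation, so that the relation $\ip{\mcl A\mbf x}{\mbf y}=\ip{\mbf x}{\mcl A^*\mbf y}$ used in Step 2 holds. The extra coercivity requirement in Def.~\ref{def:anihilator} is not used here. It matters only for a converse (necessity) direction, which the lemma does not claim.
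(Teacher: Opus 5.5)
Your proposal is correct and follows essentially the same argument as the paper. Both substitute $\mbf x=\mcl S\mbf y$ (respectively $\mcl R\mbf v$) into the quadratic form of \eqref{proj2}, observe that the cross terms vanish because $\mcl V\mcl S=0$ (respectively $\mcl U\mcl R=0$), and conclude from the arbitrariness of the test function, with the coercivity condition of Def.~\ref{def:anihilator} playing no role in this direction, as you note.
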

	\proof
	By definition, Eqn.~\eqref{proj2} implies
	\begin{align*}
		\ip{\mbf x}{(\mcl{Q+U^*XV+V^*X^*U}) \mbf x} \leq 0,
	\end{align*}
	for all $\mbf x \in \R L_2^{m,n}$. Now, note that, for $\mbf x = \mcl S \mbf y$,\vspace{-1 mm}
	\begin{align*}
		\ip{\mcl S\mbf y}{(\mcl{Q+U^*XV+V^*X^*U})\mcl S \mbf y}\leq 0
	\end{align*}
	for some $\mbf y \in \R L_2^{k,l}$, implying
	\begin{align*}
		\hspace{-1mm}&\ip{\mbf y}{(\mcl S)^*\mcl Q\mcl S\mbf y}+\ip{\mbf y}{((\mcl S)^*\mcl{U^*X}(\mcl V\mcl S)+(\mcl V\mcl S)^*\mcl{X^*U}\mcl S)\mbf y} \leq 0.
	\end{align*}
	But $\mcl V\mcl S\mbf y =\mcl V(\mcl S\mbf y) = 0$, yielding $\ip{\mbf y}{(\mcl S)^*\mcl Q\mcl S\mbf y} \leq 0$,
	for all $\mbf y \in \R L_2^{k,l}$, resulting in the first inequality of Eqn.~\eqref{proj1}.
	
	Similarly, making $\mbf x = \mcl R \mbf v$ for some $\mbf v \in \R L_2^{i,j}$ yields the second inequality of Eqn.~\eqref{proj1}.
	\endproof \vspace{-2mm}

\subsection{Stabilizing Static Output Feedback Controller}\vspace{-1mm}

For a given 4-PI operator $\mcl K$ (obtained from the solution of the LPI in Lem.~\ref{stage1}), the following theorem provides an LPI 
for design of a stabilizing SOF controller. 
\begin{thm}\label{stage2}
	Given $\mcl{T, A, B, C}$, and $\mcl K \in \PI_4$, suppose there exist $\mcl P=\mcl P^*~\in~\PI_{n,n}^{m,m}$, $F~\in~ \R^{n_u \times n_u}$, $Z \in \R^{n_u \times n_y}$, and constants $\alpha,\delta, \epsilon > 0$, such that $\mcl P \succcurlyeq \delta I$ and $\Phi+\Phi^* \preccurlyeq 0$, where
		\begin{align}\label{eq:Phi}
			\Phi=\bmat{-F+\frac{\epsilon}{2} I
				& \mcl{B^* P T} + Z \mcl{C} -F\mcl{K}\\
				0& \mcl{T^*P(\mcl{A+BK+\alpha T})}}.
		\end{align}   
    Then, if $L = F^{-1}Z$, 
    the PIE defined by $\{\mcl T, \mcl A + \mcl B L \mcl C\}$
 is exponentially stable with decay rate $\alpha$.
\end{thm}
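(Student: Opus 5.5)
The plan is to show that $\Phi+\Phi^*\preccurlyeq 0$ implies the primal stability LPI of Lem.~\ref{lem:stability} for the closed-loop pair $\{\mcl T,\mcl A+\mcl B L\mcl C\}$, with the same $\mcl P$ and $\alpha$. The argument follows the sufficiency direction of the Projection Lemma (Lem.~\ref{lem:elimination}). Rather than building a right annihilator abstractly, I will test the operator inequality on a specific family of vectors, namely those that make the $F$-dependent terms cancel.

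The first step is to show that $F$ is invertible, so that $L=F^{-1}Z$ is well defined. Test $\Phi+\Phi^*\preccurlyeq 0$ on vectors $(w,0)\in\R^{n_u}\times\R L_2^{m,n}$. This gives $w^T(-F-F^T+\epsilon I)w\le 0$ for all $w$, hence $F+F^T\succcurlyeq \epsilon I\succ 0$. If $Fw=0$ for some $w$, then $w^T(F+F^T)w=2w^TFw=0$, which forces $w=0$; so $F$ is nonsingular. With $Z=FL$, the off-diagonal block of $\Phi$ becomes
\[
\mcl B^*\mcl P\mcl T+F(L\mcl C-\mcl K).
\]

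The second step is to restrict to vectors of the form $(w,\mbf x)$ with $w=(L\mcl C-\mcl K)\mbf x\in\R^{n_u}$ and $\mbf x\in\R L_2^{m,n}$ arbitrary. This is a bounded PI map $\mbf x\mapsto (w,\mbf x)$, and it plays the role of $\mcl S$ in Lem.~\ref{lem:elimination}. Expanding $\ip{(w,\mbf x)}{(\Phi+\Phi^*)(w,\mbf x)}=2\ip{(w,\mbf x)}{\Phi(w,\mbf x)}$ gives
\[
2\Big(-w^TFw+\tfrac{\epsilon}{2}|w|^2+\ip{w}{\mcl B^*\mcl P\mcl T\mbf x}+\ip{w}{F(L\mcl C-\mcl K)\mbf x}+\ip{\mbf x}{\mcl T^*\mcl P(\mcl A+\mcl B\mcl K+\alpha\mcl T)\mbf x}\Big).
\]
Since $(L\mcl C-\mcl K)\mbf x=w$, the two $F$-terms cancel exactly. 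Next use $\ip{w}{\mcl B^*\mcl P\mcl T\mbf x}=\ip{\mcl B(L\mcl C-\mcl K)\mbf x}{\mcl P\mcl T\mbf x}$. The $\mcl B\mcl K$ contributions then cancel against the $\mcl T^*\mcl P\mcl B\mcl K$ term, leaving
\[
\epsilon|w|^2+2\ip{\mcl T\mbf x}{\mcl P(\mcl A+\mcl B L\mcl C)\mbf x}+2\alpha\ip{\mcl T\mbf x}{\mcl P\mcl T\mbf x}\le 0.
\]
Dropping $\epsilon|w|^2\ge 0$ and symmetrizing, this reads
\[
(\mcl A+\mcl B L\mcl C)^*\mcl P\mcl T+\mcl T^*\mcl P(\mcl A+\mcl B L\mcl C)\preccurlyeq-2\alpha\mcl T^*\mcl P\mcl T.
\]
Together with $\mcl P\succcurlyeq\delta I$, Lem.~\ref{lem:stability} then gives exponential stability with decay rate $\alpha$. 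By Lem.~\ref{thmstability}, the delay system in Eqn.~\eqref{DDE_CL} is exponentially stable as well.

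The main obstacle is the bookkeeping in the second step rather than any deep issue. The quadratic form must be handled on $\R^{n_u}\times\R L_2^{m,n}$ with the correct inner product. The adjoint identity $\ip{w}{\mcl B^*\mbf v}=\ip{\mcl B w}{\mbf v}$ must hold for the mixed finite/infinite-dimensional PI operators; this follows from the closure of $\PI_4$ under adjoints. Care is also needed to confirm that only the real part of $\Phi$ enters, so that the zero lower-left block causes no trouble. I would verify the sign conventions by redoing the ODE computation of Section~\ref{sec:ODE} in parallel.
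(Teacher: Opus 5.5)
Your proof is correct and is essentially the paper's argument: the paper writes $\Phi+\Phi^*=\mcl Q+\mcl U^*F\mcl V+\mcl V^*F^*\mcl U$ and applies its Projection Lemma with the right annihilator $\mcl S=\bmat{L\mcl C-\mcl K\\ I}$ of $\mcl V$. You instead evaluate the quadratic form directly on the range of that same $\mcl S$, which is exactly what the lemma's proof does. Your invertibility argument for $F$ (via $w^T(F+F^T)w=2w^TFw$) is a slightly more direct version of the paper's Cauchy--Schwarz step, and you explicitly account for the nonnegative term $\epsilon|w|^2$, which the paper drops without comment.
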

\proof
Our first goal is to show the invertibility of the matrix $F$. Note that $\Phi+\Phi^* \preccurlyeq 0$ implies $F+F^T\succcurlyeq \epsilon I \succ 0$. Then $
x^T(F+F^T)x > 0,
$ for all $x \in \R^{n_u}$. But $x^T(F+F^T)x=2x^TFx$, yielding $x^TFx > 0$. Next, from Cauchy-Schwartz inequality, $\norm{x}\norm{Fx} \geq|x^TFx| \geq x^TFx$. Consequently, $\norm{Fx} > 0$, implying that F is non-singular. 

Our task is now to show that $\Phi+\Phi^* \preccurlyeq 0$ implies exponential stability of 
the closed-loop PIE 
Eqn.~\eqref{PIE_CLo}. 
The key observation is that, substituting the matrix $Z=FL$, it is clear that \[
	\Phi+\Phi^*=\mcl Q + \mcl U^*F\mcl V + \mcl {V^*}F^*\mcl{U} ,
\]
where,
{\small
	\begin{align*}
		\mcl Q &:= \bmat{\epsilon I & \mcl{B^*PT}\\\mcl T^* \mcl P \mcl B & \mcl{T^* P}(\mcl{A+BK+\alpha T})+(\mcl{A+BK+\alpha T})^*\mcl{PT}},\\
		\mcl V &= \bmat{-I_{n_u} &L\mcl{C-K}}, 
        \\ \mcl U &= \bmat{I_{n_u}& 0}= \fourpi{\bmat{I_{n_u} & 0}}{0}{\emptyset}{\emptyset}. 
\end{align*}}

Now, consider the operator 
  \[  \mcl S=\bmat{L\mcl{C-K}\\ I_{mK}^{m}},\]
where $I_{mK}^{m}~\in~\PI_{mK,mK}^{m,m}$ is the identity 4-PI operator. Note that 
$
	\mcl V(\mcl S \mbf x) = -(L\mcl{C-K}) \mbf x + (L\mcl{C-K})\mbf x =0,  
$ and
$\mcl S$ is a right annihilator of $\mcl V$.
Thus, from Lem.~\ref{lem:elimination}, 
we have \[(\mcl S)^*\mcl Q \mcl S\preccurlyeq 0,\] implying
\begin{align*}
(\mcl{A^*+ C^*}L^T\mcl{B^*})\mcl{PT}+\mcl T^* \mcl P(\mcl{A+B}L\mcl{C}) \preccurlyeq -2\alpha\mcl{T^*PT},
\end{align*}
which proves stability of Eqn.~\eqref{PIE_CLo} using the 
Lyapunov functional $V(\mcl T \mbf x(t))=\ip{\mcl T\mbf x(t)}{\mcl P \mcl T \mbf x(t)}$,
by Lem.~\ref{lem:stability}.
\endproof 

\subsection{Application to Time-Delay Systems}
Although Thm.~\ref{stage2} provides conditions for closed-loop stability of a PIE, the resulting gain, $L$, may be directly applied to the associated PDE to ensure closed-loop stability.
\begin{cor}
For given $\{A,B,C,A_i,C_i\}$, suppose that $\mcl{T,A,B,C}$ are as defined in Eqn.~\eqref{Box:PIs} and $L,\alpha$ satisfies the conditions of Thm.~\ref{stage2}. Then for any $x$, $x_0$ which satisfies Eqn.~\eqref{DDE} with $u(t)=Ly(t)$, we have that $\norm{x(t)}_{\R^m}\le M \norm{x_0}_{L_{\infty}}e^{-\alpha t}$ for some $M>0$.
\end{cor}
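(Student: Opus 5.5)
The plan is to chain three earlier results: Thm.~\ref{stage2}, Lem.~\ref{LemPIEDDE} and Lem.~\ref{thmstability}. Since $L$ and $\alpha$ satisfy the conditions of Thm.~\ref{stage2}, that theorem gives exponential stability with decay rate $\alpha$ of the closed-loop PIE in Eqn.~\eqref{PIE_CLo}, defined by $\{\mcl T,\mcl A+\mcl B L\mcl C\}$. What remains is to transfer this to the closed-loop DDE in Eqn.~\eqref{DDE_CL}.

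The key observation is that the closed-loop DDE is itself a DDE of the form \eqref{DDE}, with $u=0$ and system matrices $\hat A = A+BLC$ and $\hat A_i = A_i+BLC_i$. I will therefore check that the PIE operators built from $\{\hat A,\hat A_i\}$ through Eqn.~\eqref{Box:PIs} are $\{\mcl T,\mcl A+\mcl B L\mcl C\}$. The operator $\mcl T$ does not depend on the system matrices, so it is unchanged. For the other operator, composing $\mcl B L\mcl C$ with the formulas in \eqref{Box:PIs} gives a 4-PI operator with the following parameters:
\begin{itemize}
\item constant term $BL(C+\sum_i C_i)$;
\item $Q_1$ term $-BL[C_1\cdots C_K]$;
\item zero $Q_2$ and $R$ terms.
\end{itemize}
Adding $\mcl A$ yields constant term $\hat A+\sum_i\hat A_i$, $Q_1$ term $-[\hat A_1\cdots\hat A_K]$, and $R_0=I_\tau$. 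This is exactly the $\hat{\mcl A}$ of \eqref{Box:PIs} for the closed-loop matrices. This bookkeeping is the only real content of the proof, and the step to do carefully. The composition $\mcl B L\mcl C$ involves the degenerate (empty-dimension) blocks, so I must confirm that $\mcl B=\fourpi{B}{\emptyset}{0}{\emptyset}$ maps $\R^{n_u}$ into the finite-dimensional component only. Once that is confirmed, the identification is immediate.

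Given the identification, I apply Lem.~\ref{thmstability} to $\{\hat A,\hat A_i,\tau_i\}$. Exponential stability of the PIE $\{\mcl T,\hat{\mcl A}\}$ with rate $\alpha$ then implies exponential stability of \eqref{DDE_CL} with rate $\alpha$, in the sense of Def.~\ref{def:stability}. That is, $\norm{x(t)}_{\R^m}\le M\norm{x_0}_{L_\infty}e^{-\alpha t}$, where, by the proof of Lem.~\ref{thmstability}, $M$ can be taken as $(1+K)$ times the PIE constant.

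Finally, any $x$ satisfying \eqref{DDE} with $u(t)=Ly(t)$ satisfies \eqref{DDE_CL}, since $y(t)=Cx(t)+\sum_i C_ix(t-\tau_i)$. This gives the claim. Alternatively, one can argue directly through Lem.~\ref{LemPIEDDE}, using the one-to-one correspondence between solutions of the SOF-controlled DDE and solutions of \eqref{PIE_CLo}, and then repeat the norm estimate from the proof of Lem.~\ref{thmstability}.
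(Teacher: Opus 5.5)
Your proposal is correct and matches the argument the paper leaves implicit (it gives no separate proof): Thm.~\ref{stage2} yields exponential stability of the closed-loop PIE \eqref{PIE_CLo}, and Lem.~\ref{thmstability}, through the solution correspondence of Lem.~\ref{LemPIEDDE}, transfers this to the DDE with a uniform constant. Your explicit check that $\mcl A+\mcl B L\mcl C$ equals the operator $\mcl A$ of Eqn.~\eqref{Box:PIs} built from $A+BLC$ and $A_i+BLC_i$ is correct, and it is exactly the identification needed to apply Lem.~\ref{thmstability} formally.
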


\section{Numerical Examples}\label{sec:num}

In this section, we validate the proposed SOF synthesis condition by constructing the SOF gain, $L$, for several test cases and simulating the resulting closed-loop dynamics subject to non-zero initial conditions. The LPIs of Lem.~\ref{stage1} and Thm.~\ref{stage2} were computed using the Matlab toolbox PIETOOLS~\cite{manual2024} and the convex-optimization solver MOSEK, for the values of $\delta=10^{-6}$, and $\epsilon=10^{-4}$. 
Additionally, a time step of $0.01 s$ was used in PIESIM~\cite{peet2020piesim}, a numerical simulator integrated with PIETOOLS. 

\noindent \textbf{Example 1: }
 Consider the single-delay system from~\cite{TDScontrol}
 , wherein
\begin{align*} 
A&=\bmat{0 & 1 & 0 & 0\\  \frac{-2k}{M} & 0 & \frac{-mg}{M} & 0\\ 0 &0&0&1\\\frac{2k}{Ml}&0&\frac{(m+M)g}{Ml}&0},\,A_1=0, \, B=\bmat{0\\\frac{1}{M}\\0\\ \frac{-1}{Ml}},\\
C&=\bmat{C_0\\0}, \, C_1=\bmat{0\\C_0}, \,C_0=\bmat{1 & 0&0&0\\0 &0& 1&0}, \notag
\end{align*}
 $k = 1000 N/m$, $l = 0.4 m$, $g = 9.8 m/s^2$, $m = 0.1 kg$, $M = 1 kg$, and $\tau=0.1 s$. The PIE parametrization of this system can be obtained by using the formulae of Eqn.~\eqref{Box:PIs}. Then, the LPI from Lem.~\ref{stage1} can be solved using PIETOOLS. Finally, the LPI in Thm.~\ref{stage2} provides the SOF gain. A bisection algorithm can be used to maximize the decay rate $\alpha$ by solving Lem.~\ref{stage1} and Thm.~\ref{stage2} in each iteration. 
For $\alpha=1.8154$, the resulting controller is $u(t)=~\bmat{2374.12&321.31&-317.25&-209.37} y(t)$ and the maximum real part of the eigenvalues is estimated as $-2.2732$. The state trajectories of the resulting closed-loop system with initial condition $x(t)=\bmat{1&1&1&1}^T$, for all $t \in [-0.1, 0]$, are presented in Fig.~\ref{fig:1}.

By contrast, using the non-convex approach to SOF in~\cite{TDScontrol}, one obtains a gain of \[L~=~10^3\bmat{8.11 &4.99&-5.71&-2.48}\] and a maximum real part of eigenvalues estimated as $-1.4059$.
 \begin{figure}[t]
\centering
\begin{subfigure}{0.49\linewidth}
	\includegraphics[width=\linewidth, height=35mm]{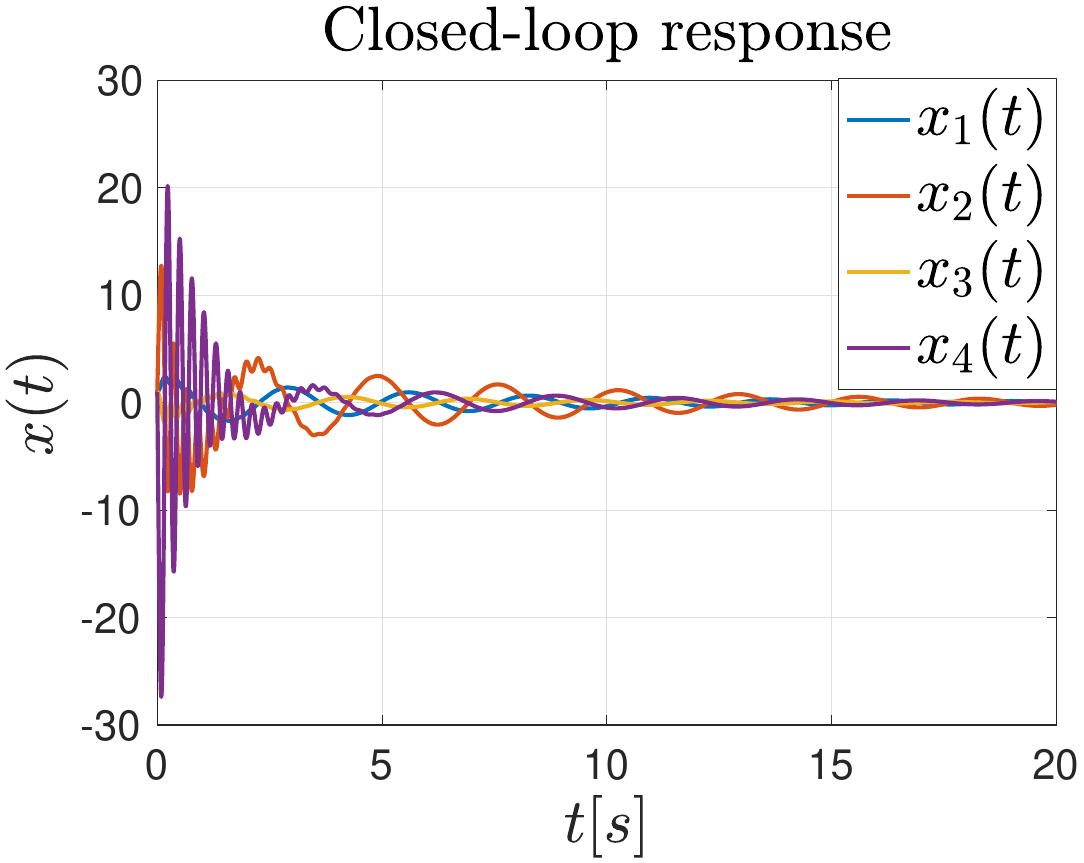}
	\caption{}
\end{subfigure}
 \hfill
\begin{subfigure}{0.49\linewidth}
\includegraphics[width=\linewidth,height=35mm]{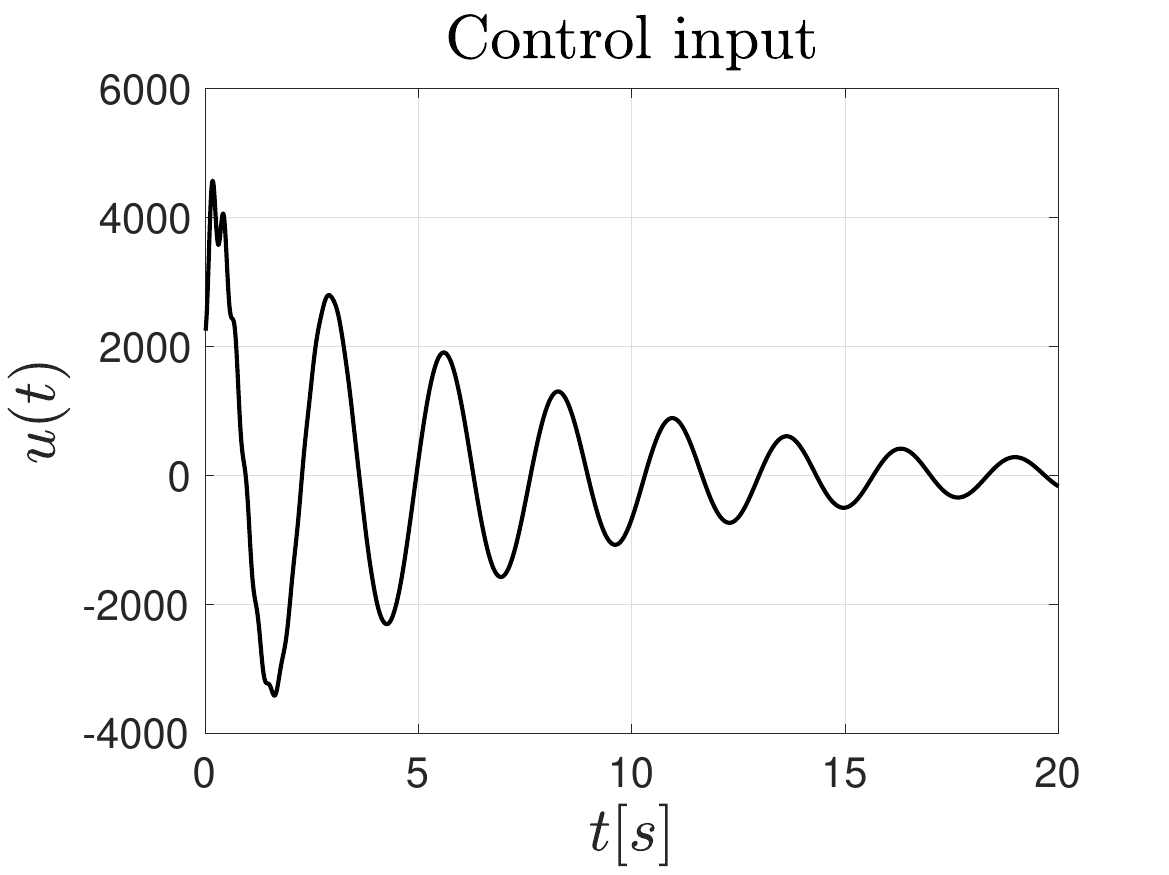}
	\caption{}
\end{subfigure}
\caption{(a) Trajectories of the states of the closed-loop system in Ex.~1, with $x(t)=~\bmat{1&1&1&1}^T$ for all $t \in [-0.1, 0]$. (b) The corresponding control input $u(t)$.}
\label{fig:1}
\end{figure}
\begin{figure}[t]
\centering
\begin{subfigure}{0.49\linewidth}
	\includegraphics[width=\linewidth, height=35mm]{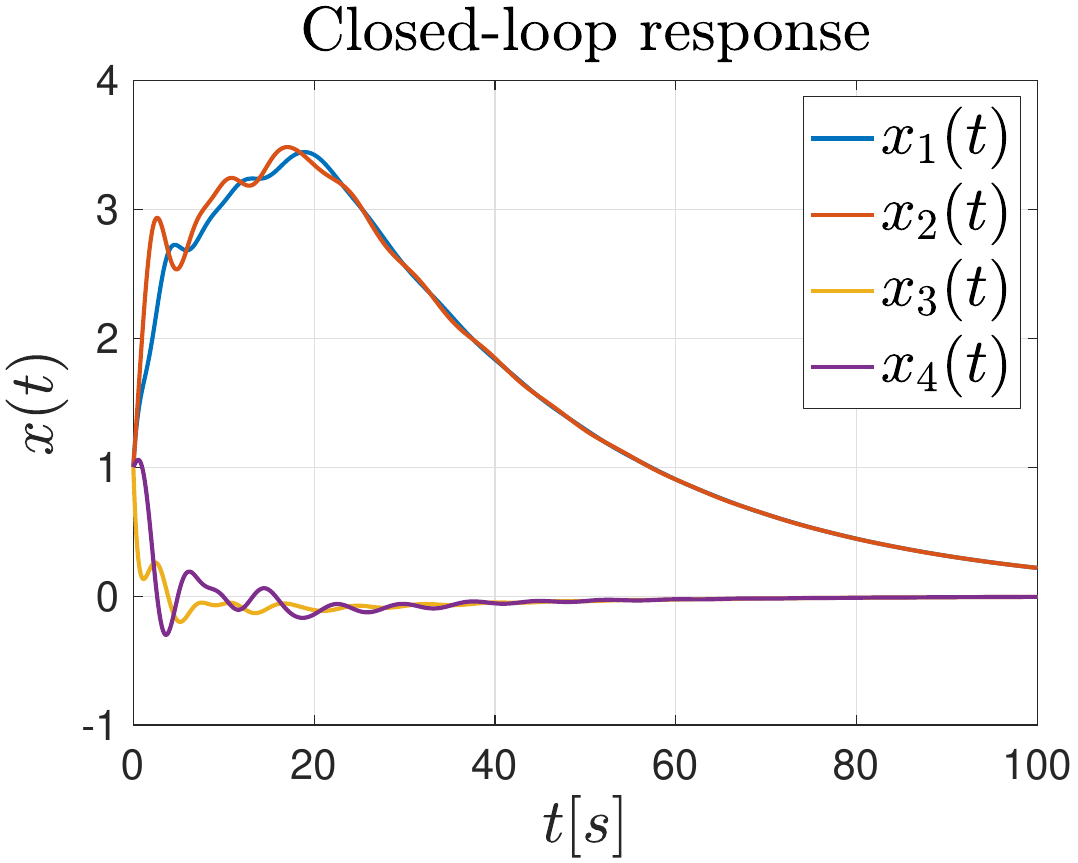}
	\caption{}
\end{subfigure}
\begin{subfigure}{0.49\linewidth}
\includegraphics[width=\linewidth,height=35mm]{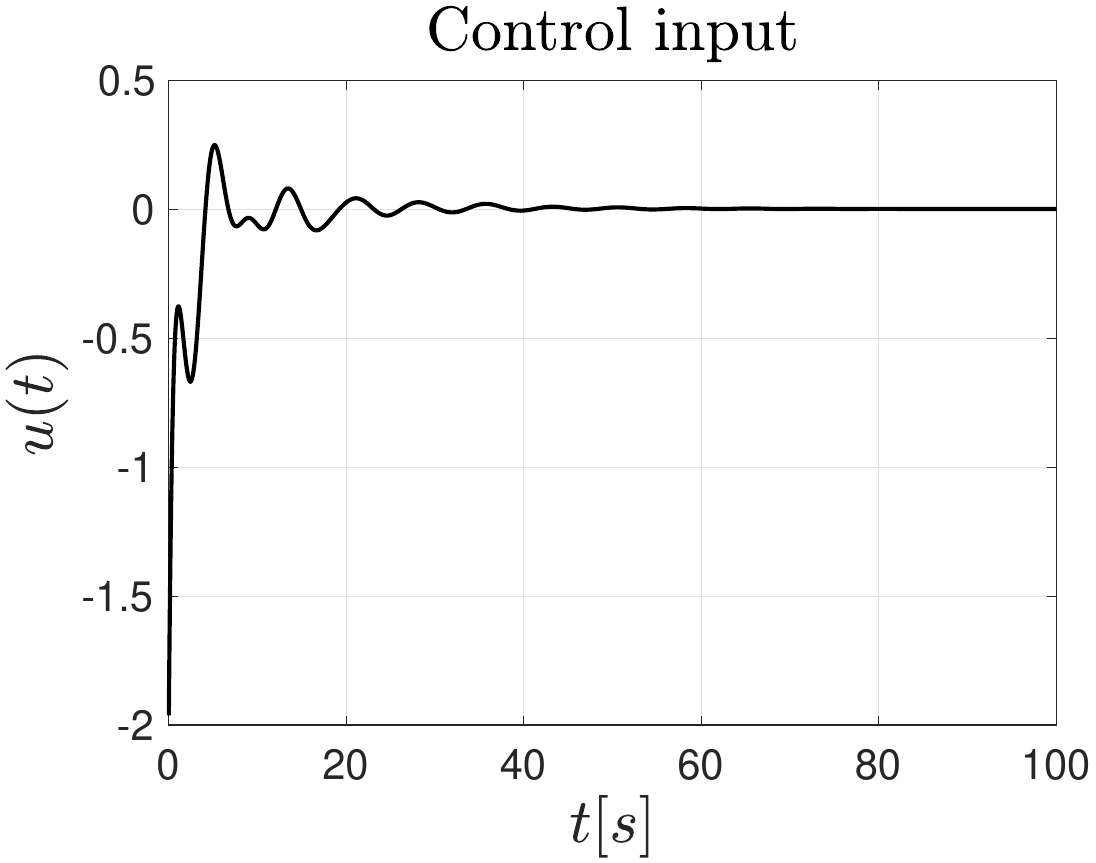}
	\caption{}
\end{subfigure}
\caption{(a) Trajectories of the states of the closed-loop system in Ex.~2, with $x(t)=~\bmat{1&1&1&1}^T$ for all $t \in [-20, 0]$. (b) The corresponding control input $u(t)$.}
\label{fig:Ex4}
\end{figure}
\begin{figure}[t]
\centering
\begin{subfigure}{0.49\linewidth}
	\includegraphics[width=\linewidth, height=35mm]{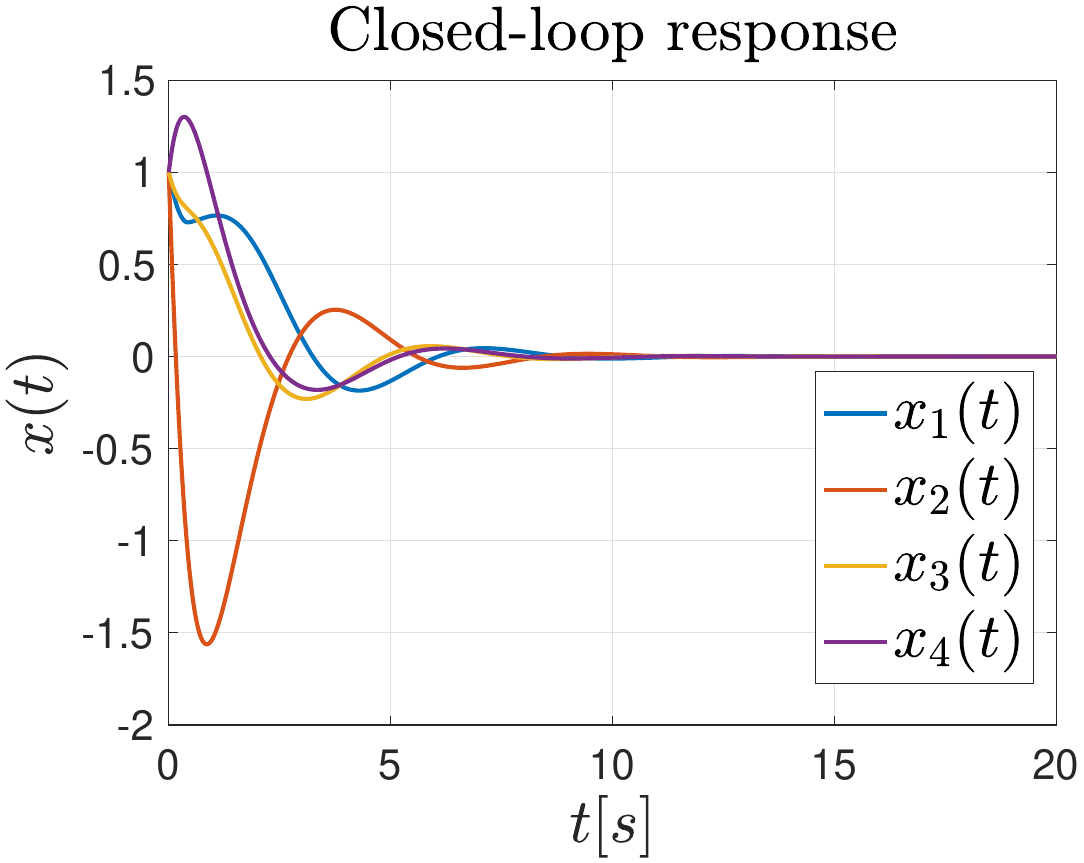}
	\caption{}
\end{subfigure}
\hfill
\begin{subfigure}{0.49\linewidth}
\includegraphics[width=\linewidth,height=35mm]{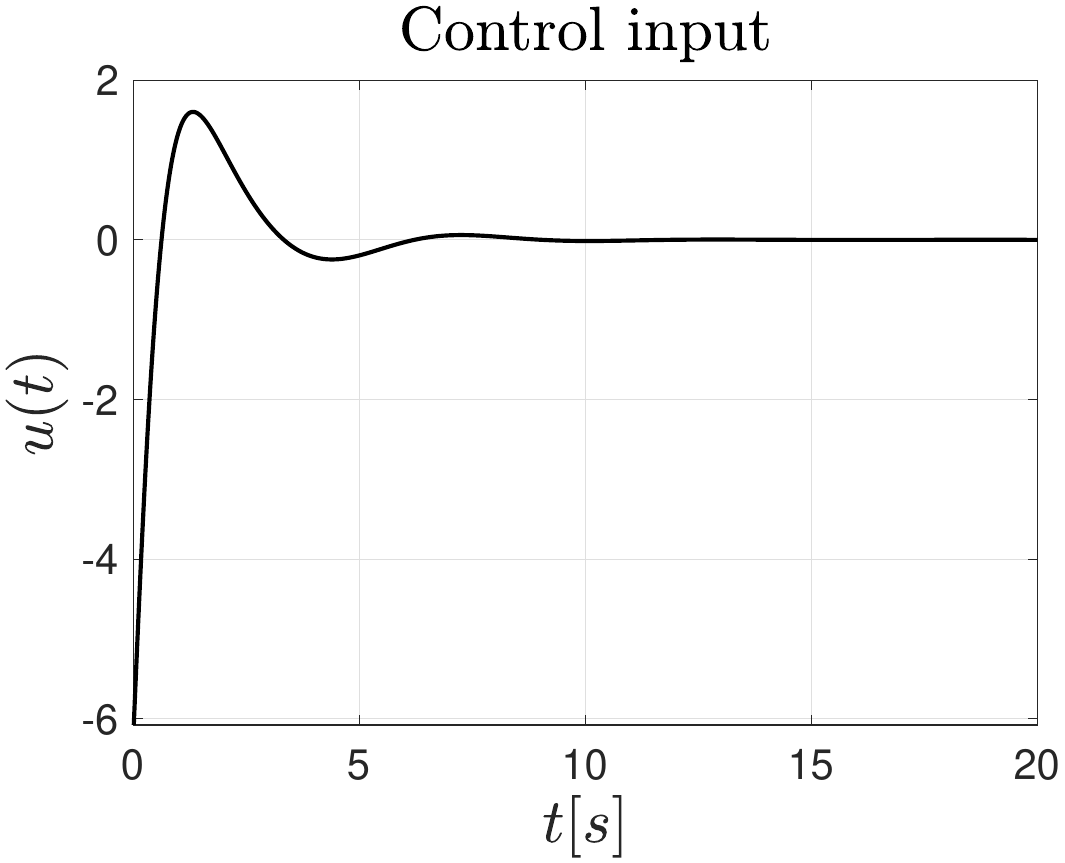}
	\caption{}
\end{subfigure}
\caption{(a) Trajectories of the states of the closed-loop system in Ex.~3, with $x(t)=~\bmat{1&1&1&1}^T$ for all $t \in [-0.45, 0]$. (b) The corresponding control input $u(t)$.}
\label{fig:Ex3}
\end{figure}
  \begin{figure}[t]
\centering
\begin{subfigure}{0.49\linewidth}
	\includegraphics[width=\linewidth, height=35mm]{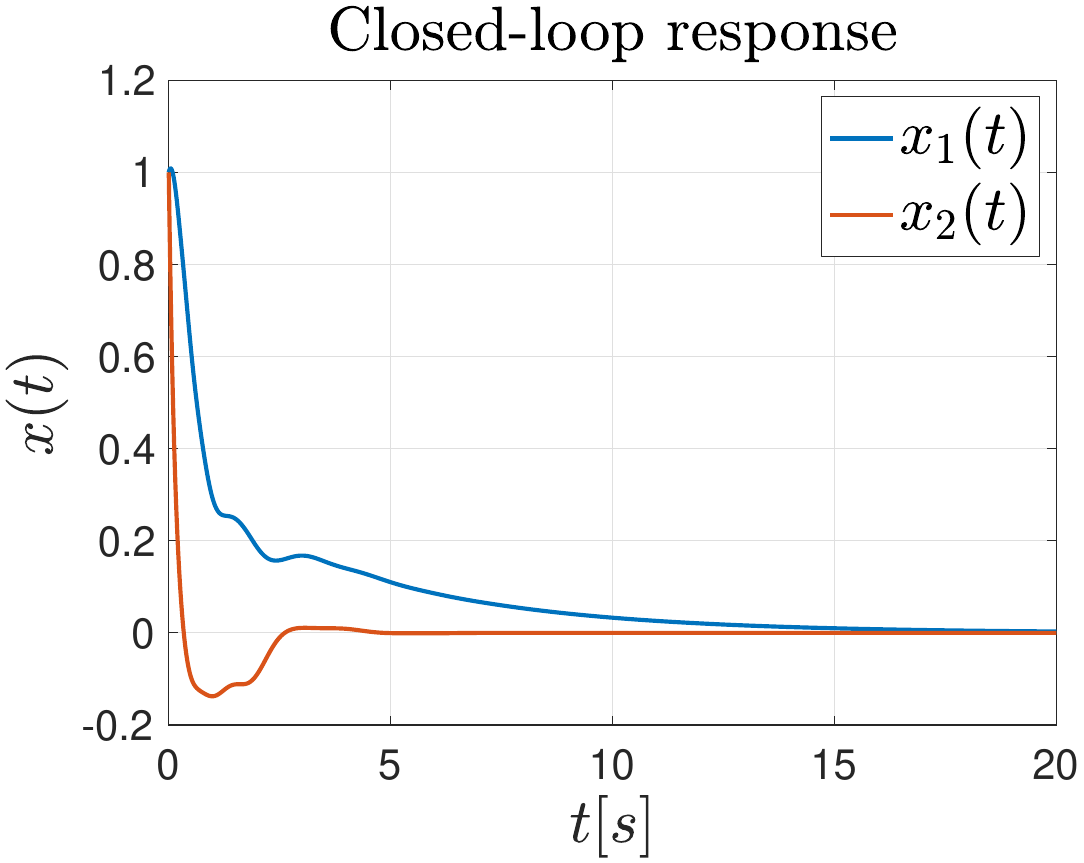}
	\caption{}
\end{subfigure}
\begin{subfigure}{0.49\linewidth}
	\includegraphics[width=\linewidth,height=35mm]{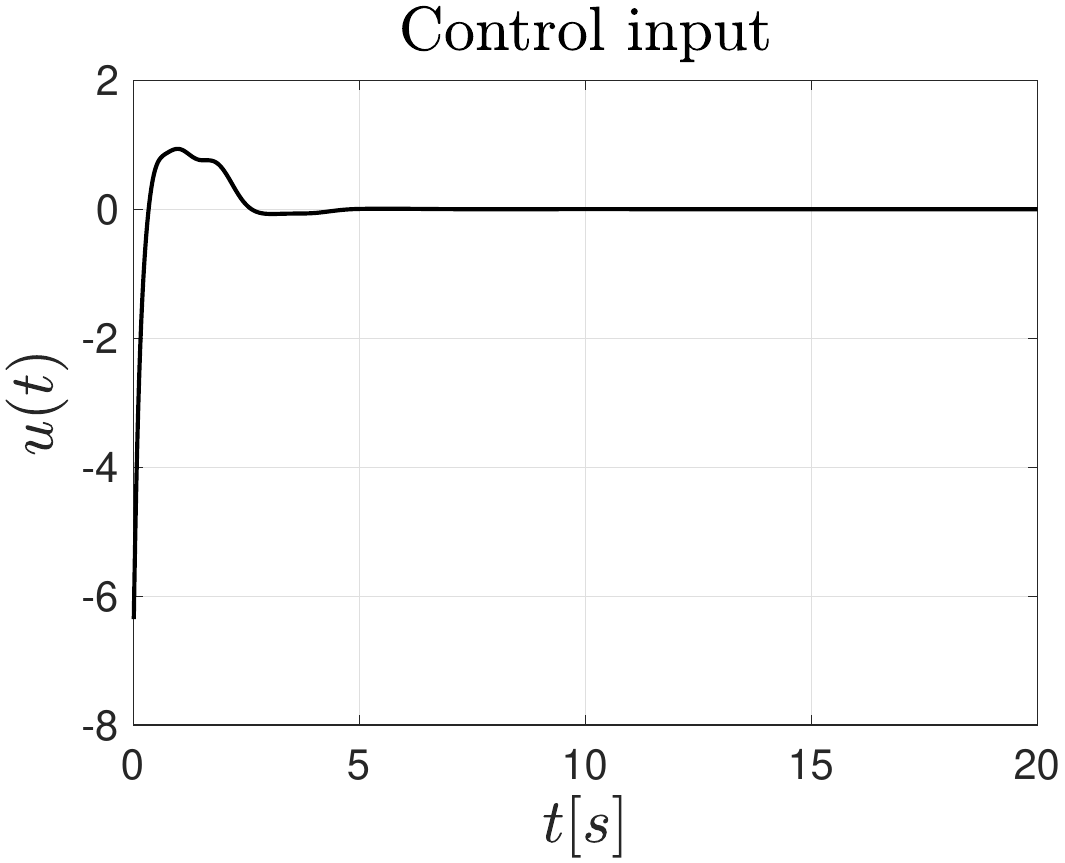}
	\caption{}
\end{subfigure}
\caption{(a) Trajectories of the states of the closed-loop system in Ex.~4, with $x(t)=\bmat{1&1}^T$ for all $t \in [-2, 0]$. (b) The corresponding control input $u(t)$.}
\label{fig:Ex5}
\end{figure}

\noindent \textbf{Example 2: }Consider the single-delay system from~\cite{hao2015static}, wherein 
\begin{align*}
A&=\bmat{0 & 0&1&0\\0 &0&0&1\\ -1 & 1&0&0\\1 &-1&0&0}, \, \quad A_1=0.1 \cdot A, \,\quad   B=\bmat{0\\0\\1\\0}, \notag\\
C&=\bmat{0 & I_2&0}, \,\qquad \qquad  C_1=0.
\end{align*}

 The LPI conditions of Lem.~\ref{stage1} and Thm.~\ref{stage2} were implemented with $\alpha = 10^{-12}$, and the SOF control law was obtained as $u(t)=\bmat{-0.055832&-1.9481} y(t)$ for $\tau=20 s$. The state trajectories with initial condition $x(t)=~\bmat{1&1&1&1}^T$, for all $t \in [-20, 0]$ are presented in Fig.~\ref{fig:Ex4}. 
 
   
   \noindent \textbf{Example 3: }Consider the single-delay system from~\cite{Fridman2009}, wherein
\begin{align*}
A&=\bmat{-0.2 & 0&0&0\\0 &0&0&-1\\ -1 & 0&-1&1\\0 &1&1&0}, \, A_1=\bmat{-0.8 & 0&1&0\\ 0 & 0&0&0\\0 &0&0&0\\0 &0&0&0}, \, \notag\\ B&=\bmat{0&1&0&0}^T,
C=\bmat{0 & I_2&0}, \, C_1=0.
\end{align*}
 The LPI conditions of Lem.~\ref{stage1} and Thm.~\ref{stage2} were implemented with $\alpha = 10^{-12}$, and the SOF control was obtained as $u(t)=\bmat{-2.8216&-3.392} y(t)$ for $\tau=0.45$. The state trajectories with initial condition $x(t)=\bmat{1&1&1&1}^T$, for all $t \in [-0.45, 0]$ are presented in Fig.~\ref{fig:Ex3}. It is possible to obtain feasible results for the corresponding system with a delay value $\tau$ up to $1.12 s$. Comparatively, applying the adapted LMIs of~\cite{hao2015static} to this system, a feasible solution cannot be obtained, highlighting the reduction in conservatism of the new procedure.

 \noindent \textbf{Example 4: }Consider the two-delay system from~\cite{peet2020convex}, wherein
\begin{align*}
A=&\bmat{-1 & 2\\ 0 &1}, \, A_1=\bmat{0.6 & -0.4\\ 0 & 0}, \, A_2=\bmat{0 & 0\\ 0 & -0.5} \notag\\ B&=\bmat{0&1}^T, \, C=\bmat{0 & 1} ,\, C_1=C_2=0. 
\end{align*}
 The LPI conditions of Lem.~\ref{stage1} and Thm.~\ref{stage2} were implemented with $\alpha = 10^{-12}$, and the SOF control law was obtained as $u(t)=-6.792y(t)$ for $\tau_{1}=1 s$ and $\tau_{2} = 2 s$. The state trajectories with initial condition $x(t)=\bmat{1&1}^T$, for all $t \in [-2, 0]$ are presented in Fig.~\ref{fig:Ex5}.

\section{Conclusion}

	In this paper, we have provided a two-step method for computing stabilizing SOF controllers for systems with multiple delays in the state and output. 
    Initially, the delay system is represented as a PIE, parameterized by elements of the PI operator algebra. In the first step, a state-feedback controller is synthesized by solving an LPI optimization problem. In the second step, an extension of the projection lemma to PI operators allows for the SOF closed-loop stability conditions to be expressed as a PI inequality 
    in terms of 
    both the known state-feedback gains as well as the unknown output feedback gains. A variable substitution then results in a convex LPI, which can be tested using software such as PIETOOLS. 
     Success of this method requires the existence of a quadratic Lyapunov function, which can be used to certify stability of the SOF controller as well as a state-feedback controller computed in the first step. As a result, the accuracy of the approach could potentially depend on the gains chosen in the first step. However, numerical validation indicates the approach is highly effective regardless of how the gains are chosen in the first step.


\bibliographystyle{ieeetr}
\bibliography{Braghini_CDC2025_SOF}             

@string{TAC="IEEE Transactions on Automatic Control"}

@string{AUT="Automatica"}

@string{CDC="Proceedings of the IEEE Conference on Decision and Control"}

@string{ACC="Proceedings of the American Control Conference"}

@article{parlakci2024robust,
  title={Robust static output feedback {${H}_{\infty}$} controller design for linear parameter-varying time delay systems},
  author={Parlakci, Mehmet Nur Alpaslan},
  journal={Circuits, Systems, and Signal Processing},
  volume={43},
  number={2},
  pages={843--864},
  year={2024},
  publisher={Springer}
}

@inproceedings{toker1995np,
  title={On the {NP}-hardness of solving bilinear matrix inequalities and simultaneous stabilization with static output feedback},
  author={Toker, Onur and Ozbay, Hitay},
  booktitle=ACC,
  volume={4},
  pages={2525--2526},
  year={1995},
  organization={IEEE}
}

@article{TDScontrol,
title = {{TDS-CONTROL}: a {MATLAB} package for the analysis and controller-design of time-delay systems*},
journal = {IFAC-PapersOnLine},
volume = {55},
number = {16},
pages = {272-277},
year = {2022},
doi = {https://doi.org/10.1016/j.ifacol.2022.09.036},
author = {Pieter Appeltans and Haik Silm and Wim Michiels}

}

@article{peet2021representation,
  title={Representation of networks and systems with delay: {DDEs}, {DDFs}, {ODE--PDEs} and {PIEs}},
  author={Peet, M.},
  journal=AUT,
  volume={127},
    note = {{A}rt. no 109508},
  year={2021},
  publisher={Elsevier}
}

@article{Hajdu2020,
title = {Robust stability of milling operations based on pseudospectral approach},
journal = {International Journal of Machine Tools and Manufacture},
volume = {149},
note = {{A}rt. no 103516},
year = {2020},
author = {David Hajdu and Francesco Borgioli and Wim Michiels and Tamas Insperger and Gabor Stepan}
}

@inproceedings{peaucelle2001efficient,
  title={An efficient numerical solution for {${H}_{2}$} static output feedback synthesis},
  author={Peaucelle, D. and Arzelier, D.},
  booktitle={2001 European control conference (ECC)},
  pages={3800--3805},
  year={2001},
  organization={IEEE},
  location={Porto, Portugal}
}

@ARTICLE{Fridman2009,
  author={Han, X. R. and Fridman, Emilia and Spurgeon, Sarah K. and Edwards, Chris},
  journal={IEEE Transactions on Industrial Electronics}, 
  title={On the Design of Sliding-Mode Static-Output-Feedback Controllers for Systems With State Delay}, 
  year={2009},
  volume={56},
  number={9},
  pages={3656-3664},
  doi={10.1109/TIE.2009.2023635}}

@article{shivakumar2022dual,
  title={Dual Representations and {${H}_{\infty}$}-Optimal Control of Partial Differential Equations},
  author={Shivakumar, S. and Das, A. and Weiland, S. and Peet, M.},
number={2208.13104},
  journal={arXiv},
  year={2022}
}

@inproceedings{sandou2008receding,
  title={Receding horizon climate control in metal mine extraction rooms},
  author={Sandou, Guillaume and Witrant, Emmanuel and Olaru, Sorin and Niculescu, Silviu-Iulian},
  booktitle={IEEE International Conference on Automation Science and Engineering},
  pages={839--844},
  year={2008},
  organization={IEEE},
  location={Arlington, VA, USA}
}

@article{liu2015static,
  title={Static output feedback stabilization for systems with time-varying delay based on a matrix transformation method},
  author={Liu, ZhenWei and Zhang, HuaGuang and Sun, QiuYe},
  journal={Science China. Information Sciences},
  volume={58},
  number={1},
  pages={1--13},
  year={2015},
  publisher={Springer Nature BV}
}

@article{jagt2024h,
  title={{$H_{\infty}$}-Optimal Estimator Synthesis for Coupled Linear {2D PDEs }using Convex Optimization},
  author={Jagt, Declan S and Peet, Matthew M.},
number={2402.05061},
  journal={arXiv},
  year={2024}
}

@article{peet2020convex,
  title={A Convex Solution of the ${H}_{\infty}$-Optimal Controller Synthesis Problem for Multidelay Systems},
  author={Peet, Matthew M},
  journal={SIAM Journal on Control and Optimization},
  volume={58},
  number={3},
  pages={1547--1578},
  year={2020},
  publisher={SIAM}
}

@article{hao2015static,
  title={Static output-feedback controller synthesis with restricted frequency domain specifications for time-delay systems},
  author={Hao, Yuqing and Duan, Zhisheng},
  journal={IET Control Theory \& Applications},
  volume={9},
  number={10},
  pages={1608--1614},
  year={2015},
  publisher={Wiley Online Library}
}

@article{peet2021partial,
  title={A Partial Integral Equation {(PIE)} representation of coupled linear {PDEs} and scalable stability analysis using {LMIs}},
  author={Peet, M.},
  journal=AUT,
  volume={125},
  note = {{A}rt. no 109473},
  year={2021},
  publisher={Elsevier}
}

@article{manual2024,
  title={{PIETOOLS} 2024: {User Manual}},
  author={Shivakumar, S. and Jagt, D. and Braghini, D. and Das, A. and Peet, M.},
  journal={arXiv},
  number={2101.02050},
  year={2021},
    url   = {https://control.asu.edu/pietools/pietools},
}

@article{peet2020piesim,
  title={A New Treatment of Boundary Conditions in {PDE} Solution with {Galerkin} Methods via {Partial Integral Equation} Framework},
  author={Peet, Y. and Peet, M.},
  journal={Journal of Computational and Applied Mathematics},
 volume={442},
  note = {{A}rt. no 115673},
  year={2024},
}

@article{danilo,
  title={Computing Optimal Upper Bounds on the ${H}_2$-norm of {ODE-PDE} Systems using {Linear Partial Inequalities}},
  author={Braghini, D. and Peet, M.},
  journal={IFAC-PapersOnLine},
  volume={56},
  number={2},
  pages={6994--6999},
  year={2023},
  publisher={Elsevier}
}

@article{shivakumar2024extension,
  title={Extension of the {P}artial {I}ntegral {E}quation representation to {GPDE} input-output systems},
  author={Shivakumar, S. and Das, A. and Weiland, S. and Peet, M.},
  journal=TAC,
  year={2024},
  publisher={IEEE}
}

@INPROCEEDINGS{das_2019CDC,
  author =       {A. Das and S. Shivakumar and S. Weiland and M. Peet},
  title =        {{${H}_{\infty}$}-Optimal Estimation for Linear Coupled {PDE} Systems},
  booktitle =    {2019 IEEE 58th Conference on Decision and Control (CDC)},
  year =         {2019},
  pages={262-267},
}

@article{pipeleers2009extended,
  title={Extended {LMI} characterizations for stability and performance of linear systems},
  author={Pipeleers, Goele and Demeulenaere, Bram and Swevers, Jan and Vandenberghe, Lieven},
  journal={Systems \& Control Letters},
  volume={58},
  number={7},
  pages={510--518},
  year={2009},
  publisher={Elsevier}
}

@article{huynh2019static,
  title={Static output feedback control of positive linear systems with output time delays},
  author={Huynh, Van Thanh and Nguyen, Cuong M and Trinh, Hieu},
  journal={International Journal of Systems Science},
  volume={50},
  number={15},
  pages={2815--2823},
  year={2019},
  publisher={Taylor \& Francis}
}
\end{document}